\let\oldsqrt\sqrt
\def\sqrt{\mathpalette\DHLhksqrt}
\def\DHLhksqrt#1#2{%
	\setbox0=\hbox{$#1\oldsqrt{#2\,}$}\dimen0=\ht0
	\advance\dimen0-0.2\ht0
	\setbox2=\hbox{\vrule height\ht0 depth -\dimen0}%
	{\box0\lower0.4pt\box2}}
\newcommand{\R}{\mathbb{R}} % reelle Zahlen
\newcommand{\N}{\mathbb{N}} % natuerliche Zahlen
\newcommand{\ov}{\overline}
\renewcommand{\phi}{\varphi}
\newcommand{\cE}{{\mathcal E}}
\newcommand{\cM}{{\mathcal M}}
\newcommand{\M}{\cM}
\theoremstyle{definition}
\newtheorem{defi}{Definition}[section]
\newtheorem{remark}[defi]{Remark}
\theoremstyle{plain} %default%plain
\newtheorem{thm}[defi]{Theorem}
\newtheorem{prop}[defi]{Proposition}
\newtheorem{lemma}[defi]{Lemma}
\theoremstyle{definition}
\numberwithin{equation}{section}
\title[On $s$-dependence of weak solution for regional fractional Laplacian]{On the $s$-derivative of weak solutions of the Poisson problem for the regional fractional Laplacian}
\author[Remi Yvant Temgoua]{Remi Yvant Temgoua}  
\address{Goethe-Universit\"{a}t Frankfurt, Institut f\"{u}r Mathematik.
	Robert-Mayer-Str. 10, D-60629 Frankfurt, Germany, and African Institute for Mathematical Sciences in Senegal (AIMS Senegal), KM 2, Route de Joal, B.P. 1418. Mbour, S\'{e}n\'{e}gal.}
\email{temgoua@math.uni-frankfurt.de,~remi.y.temgoua@aims-senegal.org}
\date{\today}
\begin{document}
	\maketitle

	\begin{abstract}
		In this paper, we analyze the $s$-dependence of the solution $u_s$ to the fractional Poisson equation $(-\Delta)^s_{\Omega}u_s=f$ in an open bounded set $\Omega\subset\R^N$. Precisely, we show that the solution map $(0,1)\rightarrow L^2(\Omega)$,~ $s\mapsto u_s$ is continuously differentiable. Moreover, when $f=\lambda_su_s$, we also analyze the one-sided differentiability of the first nontrivial eigenvalue of $(-\Delta)^s_{\Omega}$ regarded as a function of $s\in(0,1)$.
	\end{abstract}
	
	{\footnotesize
		\begin{center}
			\textit{Keywords.} Fractional Poisson equation, Continuously differentiable, eigenvalue.
		\end{center}
	}
	
	\section{Introduction}\label{introduction}
	In a bounded domain $\Omega\subset\R^N~(N\geq2)$ with $C^{1,1}$ boundary, we consider the following nonlocal  Poisson problem 
	\begin{equation}\label{poisson-problem-for-regional-fractional-laplacian}
	(-\Delta)^s_{\Omega}u_s=f\quad\text{in}\quad\Omega,
	\end{equation}
	where, $s\in(0,1)$ and $f\in L^{\infty}(\Omega)$ with $\int_{\Omega}f\ dx=0$. Here,  $(-\Delta)^s_{\Omega}$ stands for the regional fractional Laplacian define for all $u\in C^2(\ov\Omega)$ by
	\begin{equation*}
	(-\Delta)^s_{\Omega}u(x)=C_{N,s}P.V.\int_{\Omega}\frac{u(x)-u(y)}{|x-y|^{N+2s}}\ dy,\qquad x\in\Omega,
	\end{equation*}
	where $P.V.$ is a commonly used abbreviation for ''in the principal value sense'' and the normalized constant $C_{N,s}$ is defined by
	\begin{equation}\label{bound-of-constant-c-n-s}
	C_{N,s}=s(1-s)\pi^{-\frac{N}{2}}2^{2s}\frac{\Gamma(\frac{N+2s}{2})}{\Gamma(2-s)}\in\Big(0,4\Gamma\Big(\frac{N}{2}+1\Big)\Big],
	\end{equation}
	$\Gamma$ being the usual Gamma function. The bounds in \eqref{bound-of-constant-c-n-s} can be found in \cite[page $8$]{terracini2018s}. As known, $(-\Delta)^s_{\Omega}$ represents the infinitesimal generator of the so-called \textit{censored stable L\'{e}vy process}, that is, a stable process in which the jumps between $\Omega$ and its complement are forbidden (see e.g.  \cite{andreu2010nonlocal,bogdan2003censored,chen2018dirichlet,guan2006integration,guan2006reflected,warma2015fractional} and the references therein). 
	
	The study of the $s$-dependence of the solution of the Poisson problem involving nonlocal operators has recently received quite some interest. This kind of study allows a well understanding of the asymptotic behavior of the solution at $s\in(0,1)$. For instance, in \cite{biccari2018poisson}, the authors analyzed the limit behavior as $s\rightarrow1^-$ of the solution to the fractional Poisson equation $(-\Delta)^su_s=f_s$, $x\in\Omega$ with homogeneous Dirichlet boundary conditions $u_s\equiv0$, $x\in\R^N\setminus\Omega$ and provided continuity in a weak setting. We also refer to \cite{deblassie2005alpha} where the equicontinuity of eigenfunctions and eigenvalues of $(-\Delta)^s$ in $\Omega$ was studied for $s$ belonging in a compact subset of $(0,1)$. Small order asymptotics of eigenvalue problems for the operators $(-\Delta)^s$ and $(-\Delta)^s_{\Omega}$ has been studied recently in \cite{chen2019dirichlet,feulefack2022small,temgoua2021eigenvalue}. We notice that this type of optimization in the small order-dependent
	appears in population dynamics \cite{pellacci2018best}, in optimal control \cite{antil2018optimization,sprekels2017new}, in fractional harmonic maps \cite{antil2021approximation}, and in image processing \cite{antil2017spectral}. 
	
	Recently, Burkovska and Gunzburger \cite[section 5]{burkovska2020affine} studied the $s$-regularity of solutions to Dirichlet problems driven by the fractional peridynamic operator. Precisely, they proved that the solution map $(0,1)\to L^2(\Omega),~s\mapsto u_s$ is of class $C^{\infty}$. Their argument is based on differentiating the corresponding bilinear form of the problem with respect to $s$. In \cite{jarohs2020new}, Jarohs, Salda$\tilde{\text{n}}$a, and Weth established the $C^1$-regularity of the map $(0,1)\rightarrow L^{\infty}(\Omega)$, $s\mapsto u_s$, where $u_s$ is given as the unique weak solution to the fractional Poisson problem $(-\Delta)^su_s=f$, $x\in\Omega$ with the homogeneous Dirichlet boundary data $u_s\equiv0$, $x\in\R^N\setminus\Omega$, where $\Omega$ is a bounded domain with $C^2$ boundary and $f\in C^{\alpha}(\ov\Omega)$ for some $\alpha>0$. The main advantage in their analysis relies on the representation formula of the solution $u_s$ by mean of Green function which allows obtaining several important and powerful estimates. 
	
	The purpose of the present paper is to analyze the $C^1$-regularity of the map $(0,1)\rightarrow L^2(\Omega)$, $s\mapsto u_s$, where $u_s$ is the unique solution of the fractional Poisson problem \eqref{poisson-problem-for-regional-fractional-laplacian}. The major difficulty in our development stems from the fact that, contrary to \cite{jarohs2020new}, we do not have an explicit representation of the solution $u_s$ in terms of Green function for every $s\in(0,1)$. We note that for $s\in(0,\frac{1}{2}]$, the fractional Poisson problem $(-\Delta)^s_{\Omega}u_s=f$, $x\in\Omega$ with Dirichlet boundary conditions $u_s\equiv g$, $x\in\partial\Omega$ remains ill-posed since the space $C^{\infty}_c(\Omega)$ is dense in the fractional Sobolev space $H^s(\Omega)$. We refer to \cite{bogdan2003censored} and the references therein for the probabilistic interpretation and to Chen and Wei \cite{chen2020non-existence} for recent results from a purely analytic point of view. 
	
	Throughout the paper, we consider \eqref{poisson-problem-for-regional-fractional-laplacian} as a free Poisson problem, so without Dirichlet boundary condition.
	
	In the following, we present the main results of the present paper. The first main result deals with the differentiability of the solution map $(0,1)\to L^2(\Omega),~ s\mapsto u_s$. It reads as follows. 
	
	\begin{thm}\label{differentiability-of-u_s}
		Let $f\in L^{\infty}(\Omega)$ with $\int_{\Omega}f\ dx=0$ and let $u_s$ be the unique weak solution of \eqref{poisson-problem-for-regional-fractional-laplacian} (see Section \ref{preliminary} below for the definition of weak solution). Then the map
		\begin{equation*}
		(0,1)\rightarrow L^2(\Omega),\quad s\mapsto u_s
		\end{equation*}
		is of class $C^1$ and $w_s:=\partial_su_s$ uniquely solves in the weak sense the equation
		\begin{equation*}
		(-\Delta)^s_{\Omega}w_s=M^s_{\Omega}u_s\quad\text{in}\quad\Omega,
		\end{equation*}
		where for every $x\in\Omega$,
		\begin{equation*}
		M^s_{\Omega}u(x)=-\frac{\partial_sC_{N,s}}{C_{N,s}}f(x)+2C_{N,s}P.V.\int_{\Omega}\frac{(u(x)-u(y))}{|x-y|^{N+2s}}\log|x-y|\ dy.
		\end{equation*}
	\end{thm}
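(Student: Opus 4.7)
The strategy is variational. For $s\in(0,1)$ write $\cE_s(u,v) = \frac{C_{N,s}}{2}\iint_{\Omega\times\Omega}\frac{(u(x)-u(y))(v(x)-v(y))}{|x-y|^{N+2s}}\,dx\,dy$ for the quadratic form associated with $(-\Delta)^s_\Omega$, so that the weak formulation reads $\cE_s(u_s,v)=\int_\Omega f v\,dx$ for every admissible mean-zero $v$. A formal $s$-derivative of this identity at fixed $v$ and $f$ gives $\cE_s(\partial_s u_s,v) = -\partial_s\cE_s(u_s,v)$, and using $\partial_s|x-y|^{-(N+2s)} = -2\log|x-y|\cdot|x-y|^{-(N+2s)}$ together with symmetrization in $(x,y)$ identifies the right-hand side with $\int_\Omega(M^s_\Omega u_s)\,v\,dx$, where the term $-\frac{\partial_s C_{N,s}}{C_{N,s}}f(x)$ arises after trading $\cE_s(u_s,v)$ back for $\int fv\,dx$. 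The equation for $w_s=\partial_s u_s$ is thereby correctly guessed, and the remaining work is to justify this formal differentiation in an $L^2$ sense.

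I would first assemble the following uniform-in-$s$ ingredients on a compact subinterval $I\Subset(0,1)$ around a fixed $s_0$: a priori bounds $\|u_s\|_{L^2(\Omega)} + \cE_s(u_s,u_s)^{1/2}\le C_I\|f\|_{L^\infty}$ from the coercivity of $\cE_s$ on the mean-zero subspace together with a Poincar\'e inequality uniform for $s\in I$; a small regularity gain $\cE_{s+\varepsilon}(u_s,u_s)\le K_I$ for some $\varepsilon=\varepsilon(I)>0$ coming from $f\in L^\infty(\Omega)$ and the $C^{1,1}$-smoothness of $\partial\Omega$; and the elementary bound $|\log|x-y||\le C_\varepsilon(1+|x-y|^{-\varepsilon})$ on the bounded set $\Omega\times\Omega$. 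The last estimate allows one to control any log-bilinear form $\iint\frac{(u(x)-u(y))(v(x)-v(y))}{|x-y|^{N+2s}}\log|x-y|\,dx\,dy$ by $\cE_{s+\varepsilon/2}(u,u)^{1/2}\cE_{s+\varepsilon/2}(v,v)^{1/2}$ up to lower-order $L^2$-terms; this in particular shows that $M^s_\Omega u_s$ defines a continuous linear functional on the energy space uniformly for $s\in I$ and that the Poisson problem defining $w_s$ admits a unique weak solution.

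Next, for small $h$ with $s+h\in I$, set $v_h:=(u_{s+h}-u_s)/h$. Subtracting the two weak formulations yields
$$\cE_{s+h}(v_h,v) \;=\; -\,\frac{\cE_{s+h}(u_s,v)-\cE_s(u_s,v)}{h}$$
for every admissible $v$. A mean value argument applied to $r\mapsto C_{N,r}|x-y|^{-(N+2r)}$ on $[s,s+h]$, combined with the bounds above, produces $|\mathrm{RHS}|\le C\,\cE_{s+h}(v,v)^{1/2}$ with $C$ independent of $h$. Testing with $v=v_h$ then gives a uniform bound on $\cE_{s+h}(v_h,v_h)$, and hence, via the compactness of the embedding $H^{s+h}(\Omega)\hookrightarrow L^2(\Omega)$ uniform in $h$, a subsequence $v_{h_n}$ converges strongly in $L^2(\Omega)$ to some $w$. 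Passing to the limit in the difference-quotient equation (with dominated convergence justified by the logarithmic control) identifies $w$ as the unique weak solution of $(-\Delta)^s_\Omega w = M^s_\Omega u_s$; uniqueness then upgrades the subsequential convergence to convergence of the full family as $h\to 0$, yielding $\partial_s u_s = w_s$ in $L^2(\Omega)$.

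The $C^1$-property amounts to showing that $s\mapsto w_s$ is continuous in $L^2(\Omega)$, which I would handle by the same stability scheme: express $w_{s'}-w_s$ as the weak solution of a Poisson-type problem whose right-hand side encodes the continuity of both $u_\cdot$ and the $s$-dependent kernel, and apply the uniform a priori estimates. The principal technical obstacle throughout is the logarithmic factor $\log|x-y|$, which lies just outside the native energy space of $(-\Delta)^s_\Omega$; the slight regularity improvement $u_s\in H^{s+\varepsilon}(\Omega)$ and a careful interpolation balancing $\log|x-y|$ against the kernel $|x-y|^{-(N+2s)}$ \emph{uniformly} in $s$ (and in $h$) are the core of the argument and the one place where the lack of a Green-function representation, contrasting with \cite{jarohs2020new}, has to be compensated by purely energetic estimates.
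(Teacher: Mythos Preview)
Your proposal is correct and follows essentially the same variational strategy as the paper: bound the difference quotient in energy via the higher regularity $u_s\in H^{s+\varepsilon}(\Omega)$ together with the estimate $|\log|x-y||\lesssim |x-y|^{-\varepsilon}$, extract an $L^2$-limit by compactness, and identify it through uniqueness of the limiting Poisson problem. The only cosmetic differences are that the paper writes the difference identity as $\cE_s(v_\sigma,\phi)=-\sigma^{-1}\bigl[\cE_{s+\sigma}(u_{s+\sigma},\phi)-\cE_s(u_{s+\sigma},\phi)\bigr]$ (fixed form on the left, perturbed solution on the right, i.e.\ the symmetric counterpart of your identity), and that it establishes only \emph{right}-differentiability plus continuity of $s\mapsto\partial_s^+u_s$, concluding $C^1$ via an abstract one-sided-derivative lemma (Proposition~\ref{diff-of-the-curve}) rather than treating both signs of $h$ at once.
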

	
	We now consider the problem \eqref{poisson-problem-for-regional-fractional-laplacian} with $f\equiv\lambda_su_s$ i.e., the eigenvalue problem
	\begin{equation*}
	(-\Delta)^s_{\Omega}u_s=\lambda_su_s\quad\text{in}~~\Omega.
	\end{equation*}
	
	Our second main result is concerned with the one-sided differentiability of the map $s\mapsto \lambda_{1,s}$ where $\lambda_{1,s}$ is the first nontrivial eigenvalue of $(-\Delta)^s_{\Omega}$ in $\Omega$ (see Section \ref{eigenvalue-problem-case} below for the definition of $\lambda_{1,s}$). It reads as follows.
	
	\begin{thm}\label{differentiability-of-eigenvalues-0}
		Regarded as function of $s$, $\lambda_{1,s}$ is right differentiable on $(0,1)$ and
		\begin{equation}\label{C-1-regularity-of-eigenvalues-0}
		\partial^+_s\lambda_{1,s}:=\lim\limits_{\sigma\rightarrow0^+}\frac{\lambda_{1,s+\sigma}-\lambda_{1,s}}{\sigma}=\inf\{J_s(u):u\in \M_s\}
		\end{equation}
		where
		\begin{equation*}
		J_s(u)=\frac{\partial_sC_{N,s}}{C_{N,s}}\lambda_{1,s}-C_{N,s}\int_{\Omega}\int_{\Omega}\frac{(u(x)-u(y))^2}{|x-y|^{N+2s}}\log|x-y|\ dxdy
		\end{equation*}
		and $\M_s$ the set of $L^2$-normalized eigenfunctions of $(-\Delta)^s_{\Omega}$ corresponding to $\lambda_{1,s}$. 
		Moreover, the infimum in \eqref{C-1-regularity-of-eigenvalues-0} is attained. 
	\end{thm}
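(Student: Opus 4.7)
The plan is to use the variational characterization
\[\lambda_{1,s} = \inf\Bigl\{\tfrac{C_{N,s}}{2}\!\int_{\Omega}\!\int_{\Omega}\tfrac{(u(x)-u(y))^{2}}{|x-y|^{N+2s}}dxdy \;:\; u\in H^{s}(\Omega),\ \|u\|_{L^{2}(\Omega)}=1,\ \textstyle\int_{\Omega}u\,dx=0\Bigr\}\]
and to bracket the right difference quotient $\sigma^{-1}(\lambda_{1,s+\sigma}-\lambda_{1,s})$ from both sides by testing the Rayleigh quotient at one scale with an eigenfunction from the other. Every $u\in\cM_{s}$ automatically satisfies $\int_{\Omega}u\,dx=0$ since $\lambda_{1,s}>0$, so it is admissible at any level $s'\in(0,1)$; analogously for $u_\sigma\in \cM_{s+\sigma}$. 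The appearance of $J_{s}$ is dictated by the pointwise identity $\partial_{s}\bigl(C_{N,s}|x-y|^{-N-2s}\bigr)=C_{N,s}|x-y|^{-N-2s}\bigl(\partial_{s}C_{N,s}/C_{N,s}-2\log|x-y|\bigr)$.

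For the upper bound on the $\limsup$, I fix $u\in\cM_{s}$ and use it as a competitor at level $s+\sigma$, giving
\[\lambda_{1,s+\sigma}-\lambda_{1,s}\leq \tfrac{1}{2}\int_{\Omega}\!\int_{\Omega}(u(x)-u(y))^{2}\Bigl(\tfrac{C_{N,s+\sigma}}{|x-y|^{N+2(s+\sigma)}}-\tfrac{C_{N,s}}{|x-y|^{N+2s}}\Bigr)dxdy.\]
Dividing by $\sigma>0$, I would pass to the limit via the dominated convergence theorem; the crucial integrability is $(u(x)-u(y))^{2}\bigl|\log|x-y|\bigr|\,|x-y|^{-N-2s}\in L^{1}(\Omega\times\Omega)$, which follows from the slightly improved regularity $u\in H^{s+\eta}(\Omega)$ inherited from elliptic regularity for $(-\Delta)^{s}_{\Omega}$ applied to the eigenvalue equation. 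This yields $\limsup_{\sigma\to 0^{+}}\sigma^{-1}(\lambda_{1,s+\sigma}-\lambda_{1,s})\le J_{s}(u)$, and taking the infimum over $\cM_{s}$ gives one half of \eqref{C-1-regularity-of-eigenvalues-0}.

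For the matching lower bound, I pick $u_{\sigma}\in\cM_{s+\sigma}$ for each small $\sigma>0$ and test the Rayleigh quotient for $\lambda_{1,s}$ against $u_{\sigma}$, obtaining the analogous inequality with the roles of $s$ and $s+\sigma$ reversed. I then need to (a) derive a uniform bound on $\{u_{\sigma}\}$ in $H^{s+\eta}(\Omega)$ via continuity of $s'\mapsto\lambda_{1,s'}$ and uniform regularity estimates; (b) extract a subsequential limit $u^{\ast}$ and identify $u^{\ast}\in\cM_{s}$ through lower semicontinuity of the quadratic form and preservation of the $L^{2}$-normalization and zero-mean condition; (c) pass to the limit in the difference quotient to conclude $\liminf\geq J_{s}(u^{\ast})\geq\inf_{\cM_{s}}J_{s}$. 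With both bounds in hand, right differentiability together with the formula \eqref{C-1-regularity-of-eigenvalues-0} follows; attainment of the infimum is automatic since the $\lambda_{1,s}$-eigenspace is finite dimensional, so $\cM_{s}$ is compact and $J_{s}$ is continuous thereon.

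The main obstacle is step (c): because $\log|x-y|$ changes sign on $\Omega\times\Omega$, a plain Fatou argument is inadequate. The resolution will either be to upgrade the subsequential convergence $u_{\sigma}\to u^{\ast}$ to strong convergence in the fractional Sobolev norm $H^{s+\eta}(\Omega)$, or to split $\log|x-y|=\log_{+}|x-y|-\log_{-}|x-y|$ and produce separate uniform majorants for the two pieces via the bound from (a) and interpolation between $H^{s}$ and $H^{s+\eta}$. This uniform integrability step, together with verifying that the cluster point of $\{u_\sigma\}$ genuinely lies in $\cM_{s}$, constitutes the analytical heart of the proof.
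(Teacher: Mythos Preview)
Your proposal is correct and follows essentially the same route as the paper: bracket the difference quotient by testing the Rayleigh quotient at level $s+\sigma$ with an eigenfunction from $\cM_s$ (limsup bound) and at level $s$ with $u_\sigma\in\cM_{s+\sigma}$ (liminf bound), using the higher regularity $u\in H^{s+\eta}(\Omega)$ from Proposition~\ref{higher-sobolev-regularity} to justify the limit passage. The paper handles what you call ``the main obstacle'' exactly via your second suggested resolution---the uniform $H^{s+\eta}$ bound on $\{u_\sigma\}$ from Proposition~\ref{higher-sobolev-regularity} together with the decay \eqref{logarithmic-decays} provides the domination needed for Lebesgue's theorem---and derives attainment from the fact that the subsequential limit $w_s$ satisfies $J_s(w_s)\le\liminf\le\limsup\le\inf_{\cM_s}J_s$, though your finite-dimensionality argument is an equally valid alternative.
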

	
	The stategy of the proof of Theorems \ref{differentiability-of-u_s} and \ref{differentiability-of-eigenvalues-0} is based on differentiating the quadratic form $\cE_s$ (see \eqref{quadratic-form}) with respect to $s$. To this end, higher Sobolev regularity of any weak solution $u_s$ of \eqref{poisson-problem-for-regional-fractional-laplacian} of the form $s+\epsilon$ is needed, see Proposition \ref{higher-sobolev-regularity} below. The latter is obtained by exploiting boundary regularity result by Fall \cite{fall2020regional}. Let us also mention that Proposition \ref{higher-sobolev-regularity} plays a crucial role in getting uniform $H^s(\Omega)$-estimates of $u_{s+\sigma}$ and the  difference quotient $\frac{u_{s+\sigma}-u_s}{\sigma}$ with respect to $\sigma$. 
	
	The paper is organized as follows. In Section \ref{preliminary} we present some preliminaries that will be useful throughout this article. In Section \ref{differentiability-for-general-value-s}, we prove Theorem \ref{differentiability-of-u_s}. Finally, Section \ref{eigenvalue-problem-case} is devoted to the proof of Theorem \ref{differentiability-of-eigenvalues-0}.  \\
	
	\textbf{Acknowledgements:} This work is supported by DAAD and BMBF (Germany) within project 57385104. The author is grateful to Tobias Weth, Mouhamed Moustapha Fall, and Sven Jarohs for helpful discussions.

	\section{Preliminary and functional setting}\label{preliminary}
	In this section, we introduce some preliminary properties that will be useful in this work. First of all, throughout the end of the paper, $d_{A}:=\sup\{|x-y|:x,y\in A\}$ is the diameter of $A\subset\R^N$ and $B_r(x)$ denotes the open ball centered at $x$ with radius $r$. We also denote by $|A|$ the $N$-dimensional Lebesgue measure of every set $A\subset\R^N$. \\
	Now, for all $s\in(0,1)$ the usual fractional Sobolev space $H^s(\Omega)$ is defined by
	\begin{align*}
	H^s(\Omega)=\{u\in L^2(\Omega):|u|^2_{H^s(\Omega)}<\infty\},
	\end{align*}
	where 
	\begin{align*}
	|u|_{H^s(\Omega)}:=\Big(\int_{\Omega}\int_{\Omega}\frac{|u(x)-u(y)|^2}{|x-y|^{N+2s}}\ dxdy\Big)^{1/2},
	\end{align*}
	is the so-called Gagliardo seminorm of $u$. Moreover, $H^s(\Omega)$ is a Hilbert space endowed with the norm 
	\begin{align*}
	\|u\|_{H^s(\Omega)}:=(\|u\|^2_{L^2(\Omega)}+|u|^2_{H^s(\Omega)})^{1/2}.
	\end{align*}
	Also, we notice the following continous embedding $H^t(\Omega)\hookrightarrow H^s(\Omega)$ for $t>s$, see e.g. \cite[Proposition 2.1]{di2012hitchhiker's}. 
	It is also useful to recall that the space  $C^{\infty}(\overline{\Omega})$ is dense in $H^s(\Omega)$ (see \cite[Corollary 2.71]{demengel2012functional}). We recall that $C^{\infty}(\ov\Omega)$ denotes the restriction of all $C^{\infty}(\R^N)$ functions on $\ov\Omega$. Moreover, we define the space $\mathbb{X}^s(\Omega)$ consists of functions in $H^s(\Omega)$ orthogonal to constants i.e., 
	\begin{align*}
	\mathbb{X}^s(\Omega):=\Big\{u\in H^s(\Omega):\int_{\Omega}u\ dx=0\Big\}.
	\end{align*}
	Clearly, $\mathbb{X}^s(\Omega)$ is a Hilbert space (with the norm $\|\cdot\|_{\mathbb{X}^s(\Omega)}:=|\cdot|_{H^s(\Omega)}$ equivalent to the usual one in $H^s(\Omega)$) for which every function $u\in\mathbb{X}^s(\Omega)$ satisfies the following fractional Poincar\'{e} inequality
	\begin{equation}\label{Poincare-inequality}
	\|u\|^2_{L^2(\Omega)}\leq\gamma_{N,s,\Omega}|u|^2_{H^s(\Omega)}\quad\text{with}~~\gamma_{N,s,\Omega}=|\Omega|^{-1}d^{N+2s}_{\Omega}.
	\end{equation}
	%The proof of the above inequality is a straightforward task and it is left for the reader's convenience. The idea is essentially based on Jensen's inequality. 
	We notice also that the space of functions $\phi\in C^{\infty}(\overline{\Omega})$ with $\int_{\Omega}\phi\ dx=0$ is dense in $\mathbb{X}^s(\Omega)$. For simplicity, we set $C^{\infty}_0(\overline{\Omega}):=\{u\in C^{\infty}(\overline{\Omega}):\int_{\Omega}\phi\ dx=0\}$. 
	The inner product and the norm in $L^2(\Omega)$ will be denoted by $\langle\cdot,\cdot\rangle_{L^2(\Omega)}$ and $\|\cdot\|_{L^2(\Omega)}$ respectively.
	
	Now, let $\cE_s$ be the quadratic form define on $H^s(\Omega)$ by
	\begin{equation}\label{quadratic-form}
	(u,v)\mapsto\cE_s(u,v)=\frac{C_{N,s}}{2}\int_{\Omega}\int_{\Omega}\frac{(u(x)-u(y))(v(x)-v(y))}{|x-y|^{N+2s}}\ dxdy.
	\end{equation}
	We have the following.
	\begin{defi}
		Let $f\in L^{\infty}(\Omega)$ with $\int_{\Omega}f\ dx=0$. We say that $u_s\in\mathbb{X}^s(\Omega)$ is a weak solution of \eqref{poisson-problem-for-regional-fractional-laplacian} if
		\begin{equation}\label{weak-formulation-direct-method}
		\cE_s(u_s,\phi)=\int_{\Omega}f\phi\ dx,\quad\forall\phi\in\mathbb{X}^s(\Omega).
		\end{equation}
	\end{defi}
	The existence and uniqueness of weak solution of the Poisson problem  \eqref{poisson-problem-for-regional-fractional-laplacian} in $\mathbb{X}^s(\Omega)$ is guaranteed by Riesz representation theorem. 
	
	Let $u_s\in\mathbb{X}^s(\Omega)$ be the unique weak solution of \eqref{poisson-problem-for-regional-fractional-laplacian}. Then, thanks to \cite{temgoua2021eigenvalue}, there exists a constant $c_1>0$ independent of $s$ such that
	\begin{equation}\label{uniformly-boundedness-of-weak-solution}
	\|u_s\|_{L^{\infty}(\Omega)}\leq c_1.
	\end{equation}
	%In contrast to fractional Laplacian, the theory of elliptic problems driven by regional fractional Laplacian is far from mature and adequate in spite of some known results. The main difficulty appears while studying regularity up to the boundary, especially in the case when $s\in(0,\frac{1}{2}]$. But 
	Very recently, Fall \cite{fall2020regional} established boundary regularity for any weak solution to  \eqref{poisson-problem-for-regional-fractional-laplacian}. Presicely, among other results, he proved that $u_s\in C^{\beta}(\overline{\Omega})$ and
	\begin{equation}\label{boundary-regularity-for-regional}
	\|u_s\|_{C^{\beta}(\overline{\Omega})}\leq C(\|u_s\|_{L^2(\Omega)}+\|f\|_{L^p(\Omega)})
	\end{equation}
	with 
	\begin{align*}
	\beta:=2s-\frac{N}{p}
	\end{align*}
	for every $s\in(0,1)$ and $p>\frac{N}{2s}$. Moreover if $s\in(\frac{1}{2},1)$ so that $\beta=2s-\frac{N}{p}>1$, he also obtained boundary H\"{o}lder regularity for the gradient of the form $\nabla u_s\in C^{\beta-1}(\overline{\Omega})$ with
	\begin{align}\label{boundary-regularity-for-the-gradient}
	\|\nabla u_s\|_{C^{\beta-1}(\overline{\Omega})}\leq C(\|u_s\|_{L^2(\Omega)}+\|f\|_{L^p(\Omega)}).
	\end{align}
	From now on and without loss of generality, we fix $p$ such that $p>\frac{N}{s}$. Moreover, the constant $C$ appearing in \eqref{boundary-regularity-for-regional} is continuous at $s\in[s_0,1)$ for some $s_0\in(0,1)$ see \cite[Theorem 1.1]{fall2020regional}. The same conclusion holds for \eqref{boundary-regularity-for-the-gradient} provided that $s\in[s_0,1)$ for some $s_0\in (\frac{1}{2},1)$ see \cite[Remark 1.4]{fall2020regional}. Hence, by taking into account \eqref{uniformly-boundedness-of-weak-solution} we obtain from \eqref{boundary-regularity-for-regional} and \eqref{boundary-regularity-for-the-gradient} uniform bound with respect to $s$ on $C^{\beta}$ and $C^{\beta-1}$ norm of $u_s$ and $\nabla u_s$ respectively as follows
	\begin{equation}\label{boundary-uniform-bound}
	\|u_s\|_{C^{\beta}(\overline{\Omega})}\leq c_2\quad\quad\text{and}\quad\quad\|\nabla u_s\|_{C^{\beta-1}(\overline{\Omega})}\leq c_3.
	\end{equation} 
	%Notice that this boundary regularity remains valid for eigenfunctions since they are all bounded, see \cite{temgoua2021eigenvalue}. 
	
	As a direct advantage of the above boundary regularity, we derive in the next proposition, higher Sobolev regularity for solution of \eqref{poisson-problem-for-regional-fractional-laplacian}.
	\begin{prop}\label{higher-sobolev-regularity}
		Let $f\in L^{\infty}(\Omega)$ with $\int_{\Omega}f\ dx=0$ and let $u_{s+\sigma}\in H^{s+\sigma}(\Omega)\cap L^{\infty}(\Omega)$ be the unique weak solution of problem \eqref{poisson-problem-for-regional-fractional-laplacian} with $s$ replaced by $s+\sigma$. Then $u_{s+\sigma}\in H^{s+\epsilon}(\Omega)$ for some $\epsilon>0$ and
		\begin{equation}\label{b1}
		\|u_{s+\sigma}\|_{H^{s+\epsilon}(\Omega)}\leq K\quad\quad\text{for all}~\sigma\in(-s_0,s_0)
		\end{equation}
		for some $s_0>0$.
	\end{prop}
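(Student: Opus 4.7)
The plan is to deduce the claimed higher Sobolev regularity from the uniform boundary Hölder bounds in \eqref{boundary-uniform-bound} via a direct estimate of the Gagliardo seminorm. Set $\beta_\sigma:=2(s+\sigma)-N/p$. Because $p>N/s$, we have $s-N/p>0$, so after possibly shrinking $s_0$ I may assume that for every $\sigma\in(-s_0,s_0)$ both $\beta_\sigma>s$ and the uniform bound $\|u_{s+\sigma}\|_{C^{\beta_\sigma}(\overline{\Omega})}\leq c_2$ from \eqref{boundary-uniform-bound} are in force. The latter relies on the continuity in the exponent of the constant in \cite[Theorem 1.1]{fall2020regional}, which yields boundedness on any sufficiently small interval around $s$; the complementary $L^\infty$-bound $\|u_{s+\sigma}\|_{L^\infty(\Omega)}\leq c_1$ from \eqref{uniformly-boundedness-of-weak-solution} is automatic.

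Next I would split the seminorm as
\[
|u_{s+\sigma}|^2_{H^{s+\epsilon}(\Omega)}=I_{\mathrm{near}}+I_{\mathrm{far}},
\]
where $I_{\mathrm{near}}$ and $I_{\mathrm{far}}$ correspond to the regions $|x-y|\le 1$ and $|x-y|>1$ respectively. The far part is handled immediately by the $L^\infty$ bound, giving $I_{\mathrm{far}}\le 4c_1^2|\Omega|\int_{|z|>1}|z|^{-N-2(s+\epsilon)}\,dz<\infty$ uniformly in $\sigma$. For the near part I use the Hölder estimate when $\beta_\sigma\le 1$ (so that $|u_{s+\sigma}(x)-u_{s+\sigma}(y)|\le c_2|x-y|^{\beta_\sigma}$) and the Lipschitz bound $|u_{s+\sigma}(x)-u_{s+\sigma}(y)|\le c_2|x-y|$ when $\beta_\sigma>1$ (valid because $C^{\beta_\sigma}(\overline\Omega)\subset C^{0,1}(\overline\Omega)$ in that regime). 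In either case one obtains, after a polar-coordinate computation,
\[
I_{\mathrm{near}}\le c_2^2|\Omega|\int_{|z|\le 1}|z|^{2\min(\beta_\sigma,1)-N-2(s+\epsilon)}\,dz,
\]
which converges provided $s+\epsilon<\min(\beta_\sigma,1)$. Since $\beta_\sigma>s$ and $s<1$, fixing $\epsilon\in\bigl(0,\tfrac{1}{2}(s-N/p)\bigr)$ and shrinking $s_0$ further if necessary makes this inequality hold uniformly for $\sigma\in(-s_0,s_0)$, so combining the two estimates yields $\|u_{s+\sigma}\|_{H^{s+\epsilon}(\Omega)}\le K$.

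The main obstacle I expect is not any single integral estimate but the uniformity bookkeeping: one must verify that $c_1$, $c_2$, the choice of $\epsilon$, and the size of $s_0$ can all be fixed independently of $\sigma$ in a neighborhood of the fixed $s\in(0,1)$. This hinges crucially on the continuity of the Hölder constant from Fall's boundary regularity result, which in turn allows one to pass from pointwise-in-$s$ bounds to locally uniform ones. The slack $s-N/p>0$ built into the assumption $p>N/s$ is precisely what provides room for a strictly positive $\epsilon$, while $s<1$ takes care of the high-regularity regime $\beta_\sigma>1$; together they yield a single $\epsilon>0$ that works uniformly throughout the full range $\sigma\in(-s_0,s_0)$.
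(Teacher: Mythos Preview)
Your argument is correct and, in the regime $\beta_\sigma\le 1$, essentially coincides with the paper's case $(i)$: both insert the H\"older bound \eqref{boundary-uniform-bound} into the Gagliardo seminorm and integrate. The two proofs diverge when $\beta_\sigma>1$ (the paper's case $2s>1$): the paper passes through $H^{1}(\Omega)$ via the gradient estimate \eqref{boundary-regularity-for-the-gradient} and then invokes the Gagliardo--Nirenberg interpolation inequality \cite{brezis2018gagliardo} to reach $H^{s+\epsilon}(\Omega)$, whereas you simply note $C^{\beta_\sigma}(\overline\Omega)\subset C^{0,1}(\overline\Omega)$ and repeat the same seminorm computation with exponent $1$ in place of $\beta_\sigma$. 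Your route is more elementary and unifies the two regimes into a single estimate; the interpolation step in the paper buys nothing additional here. Two minor remarks: the near/far split is superfluous since $\Omega$ is bounded (the paper just uses $|x-y|\le d_\Omega$ throughout), and your explicit choice $\epsilon\in\bigl(0,\tfrac{1}{2}(s-N/p)\bigr)$ must also be restricted to $\epsilon<1-s$ in the Lipschitz regime---a constraint you acknowledge verbally (``$s<1$ takes care of the high-regularity regime'') but do not build into the displayed interval.
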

	
	\begin{proof}
		Let $u_{s+\sigma}\in H^{s+\sigma}(\Omega)\cap L^{\infty}(\Omega)$ be the unique weak solution of \eqref{poisson-problem-for-regional-fractional-laplacian} with $s$ replaced by $s+\sigma$ for all $\sigma\in(-s_0,s_0)$ for some $s_0>0$. Then, \\
		
		$(i)$ If $2s\leq1$ then from \eqref{boundary-regularity-for-regional} we have that
		\begin{align*}
		\int_{\Omega}\int_{\Omega}\frac{|u_{s+\sigma}(x)-u_{s+\sigma}(y)|^2}{|x-y|^{N+2(s+\epsilon)}}\ dxdy
		&\leq\|u_s\|^2_{C^{2(s+\sigma)-\frac{N}{p}}(\overline{\Omega})}\int_{\Omega}\int_{\Omega}|x-y|^{4(s+\sigma)-\frac{2N}{p}-N-2(s+\epsilon)}\ dxdy\\
		&\leq d^{4\sigma}_{\Omega}\|u_s\|^2_{C^{2s-\frac{N}{p}}(\overline{\Omega})}\int_{\Omega}\int_{B_{d_{\Omega}}(x)}|x-y|^{2s-\frac{2N}{p}-N-2\epsilon}\ dydx\\
		&\leq\frac{\max\{d^{-4s_0}_{\Omega},d^{4s_0}_{\Omega}\}|S^{N-1}||\Omega|\|u_s\|^2_{C^{2s-\frac{N}{p}}(\overline{\Omega})}}{2(s-\epsilon-\frac{N}{p})}d^{2(s-\epsilon-\frac{N}{p})}_{\Omega}<\infty
		\end{align*}
		provided that $0<\epsilon<s-\frac{N}{p}$. This shows that $u_{s+\sigma}\in H^{s+\epsilon}(\Omega)$ with uniform bound in $\sigma\in (-s_0,s_0)$ provided that $0<\epsilon<s-\frac{N}{p}$. \\
		
		$(ii)$ If $2s>1$ then from \eqref{boundary-regularity-for-the-gradient} we have that $\nabla u_{s+\sigma}\in L^{\infty}(\overline{\Omega})\subset L^{\infty}(\Omega)$ and therefore $\nabla u_{s+\sigma}\in L^2(\Omega)$. Since also $u_{s+\sigma}\in L^2(\Omega)$, we deduce that $u_{s+\sigma}\in H^1(\Omega)$. Hence,
		\begin{align}
		&u_{s+\sigma}\in H^s(\Omega)\cap H^1(\Omega)\quad\quad\quad\quad\text{for all}~~\sigma\in[0,s_0)\label{1}\\
		&u_{s+\sigma}\in H^{\alpha(s)}(\Omega)\cap H^1(\Omega)\quad\quad\quad\text{for all}~~\sigma\in(-s_0,0]\label{2}
		\end{align}
		for some $\alpha(s)<<s$ depending only on $s$. Applying the  well-known Gagliardo-Nirenberg interpolation inequality (see e.g. \cite[Theorem 1]{brezis2018gagliardo}), we find that $u_{s+\sigma}\in H^r(\Omega)$ with 
		\begin{itemize}
			\item [$(a)$] $r=\theta s+(1-\theta)\cdot1~~\text{for all}~\theta\in (0,1)$ in the case \eqref{1}, and 
			\begin{align}\label{interpolation-inequality}
			\|u_{s+\sigma}\|_{H^r(\Omega)}\leq C(\theta,s,\Omega)\|u_{s+\sigma}\|^{\theta}_{H^s(\Omega)}\|u_{s+\sigma}\|^{1-\theta}_{H^1(\Omega)}.
			\end{align}
			\item[$(b)$] $r=\theta \alpha(s)+(1-\theta)\cdot1~~\text{for all}~\theta\in (0,1)$ in the case \eqref{2}, and
			\begin{align}\label{interpolation-inequality'}
			\|u_{s+\sigma}\|_{H^r(\Omega)}\leq C(\theta,s,\Omega)\|u_{s+\sigma}\|^{\theta}_{H^{\alpha(s)}(\Omega)}\|u_{s+\sigma}\|^{1-\theta}_{H^1(\Omega)}.
			\end{align}
		\end{itemize}
		Let us focus on the situation $(a)$. By choosing in particular $\theta=\frac{1}{2}$, then $r=\frac{s}{2}+\frac{1}{2}=s+\frac{1-s}{2}$ and we have that $u_{s+\sigma}\in H^{s+\frac{1-s}{2}}(\Omega)$. From this, we conclude that $u_{s+\sigma}\in H^{s+\epsilon}(\Omega)$ for every $0<\epsilon<\frac{1-s}{2}$. To complete the proof, it remains to show that the RHS of \eqref{interpolation-inequality} is uniform for $\sigma$ sufficiently small.
		
		From \eqref{uniformly-boundedness-of-weak-solution} and \eqref{boundary-uniform-bound} we have that
		\begin{equation}\label{b2}
		\|u_{s+\sigma}\|_{H^1(\Omega)}\leq C_1\quad\quad\text{for all}~~\sigma~~\text{sufficiently small}.
		\end{equation}
		On the other hand, since $s<s+\sigma$, then from \cite[Proposition 2.1]{di2012hitchhiker's} there exists $c>0$ depending only on $s$ and $N$ such that
		\begin{equation}\label{b3}
		|u_{s+\sigma}|_{H^{s}(\Omega)}\leq c|u_{s+\sigma}|_{H^{s+\sigma}(\Omega)}.
		\end{equation}
		Using now $u_{s+\sigma}$ as a test function in \eqref{poisson-problem-for-regional-fractional-laplacian} with $s$ replace by $s+\sigma$ and integrating over $\Omega$, one has
		\begin{align}\label{b4}
		|u_{s+\sigma}|^2_{H^{s+\sigma}(\Omega)}=\frac{2}{C_{N,s+\sigma}}\int_{\Omega}fu_{s+\sigma}\ dx\leq\frac{2|\Omega|}{C_{N,s+\sigma}}\|f\|_{L^{\infty}(\Omega)}\|u_{s+\sigma}\|_{L^{\infty}(\Omega)}\leq c\quad\text{as}~~\sigma\to0^+,
		\end{align}
		thanks to \eqref{uniformly-boundedness-of-weak-solution} and the continuity of the map $s\mapsto C_{N,s}$. This, together with \eqref{b3} yield
		\begin{equation}
		\|u_{s+\sigma}\|_{H^s(\Omega)}\leq C_2\quad\quad\text{for}~~\sigma~~\text{sufficiently small}. 
		\end{equation}
		From this, one gets $u_{s+\sigma}\in H^{s+\epsilon}(\Omega)$ with uniform bound in $\sigma\in[0,s_0)$.
		
		In situation $(b)$, a similar argument as above yields $u_{s+\sigma}\in H^{s+\tilde{\epsilon}}(\Omega)$ for some $\tilde{\epsilon}>0$ depending on $s$.  
		
		Now, by combining $(i)$ and $(ii)$, we conclude the proof. 
	\end{proof}
	This higher Sobolev regularity will be of a capital interest in the rest of the paper.\\
	Next, we recall the following decay estimate regarding the logarithmic function. For all $r,\epsilon_0>0$, there holds that
	\begin{equation}\label{logarithmic-decays}
	|\log|z||\leq\frac{1}{e\epsilon_0}|z|^{-\epsilon_0}~~\text{if}~~ |z|\leq r~~~~\text{and}~~~~|\log|z||\leq\frac{1}{e\epsilon_0}|z|^{\epsilon_0}~~ \text{if}~~|z|\geq r.
	\end{equation}
	We end this section by recalling the following.
	\begin{prop}(\cite[Lemma 6.6]{jarohs2020new})\label{diff-of-the-curve}
		Let $I\subset\R$ be an open interval, $E$ be a Banach space and $\gamma:I\to E$ be a curve with the following properties
		\begin{itemize}
			\item [$(i)$] $\gamma$ is continuous.
			\item [$(ii)$] $\partial_s^+\gamma(s):=\lim\limits_{\sigma\to0^+}\frac{\gamma(s+\sigma)-\gamma(s)}{\sigma}$ exists in $E$ for all $s\in I$.
			\item [$(iii)$] The map $I\to E,~s\mapsto\partial_s^+\gamma(s)$ is continuous.
		\end{itemize}
		Then $\gamma$ is continuously differentiable with $\partial_s\gamma=\partial_s^+\gamma$.
	\end{prop}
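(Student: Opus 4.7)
The plan is to reduce the claim to the scalar fact that a continuous function with identically vanishing right derivative is constant, and then apply a vector-valued antiderivative argument. Throughout, I write $\|\cdot\|$ for the norm of $E$.

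First I would construct an antiderivative of the continuous curve $s\mapsto\partial_s^+\gamma(s)$. Fix $s_0\in I$ and, using the fact that $s\mapsto\partial_s^+\gamma(s)$ is continuous with values in a Banach space, define the Bochner integral
\begin{equation*}
\Phi(s):=\int_{s_0}^s\partial_r^+\gamma(r)\ dr,\qquad s\in I.
\end{equation*}
By the Banach-valued fundamental theorem of calculus applied to a continuous integrand, $\Phi:I\to E$ is of class $C^1$ with $\Phi'(s)=\partial_s^+\gamma(s)$ for every $s\in I$. Then I would introduce the auxiliary curve $g:=\gamma-\Phi:I\to E$, which is continuous by $(i)$, and whose right derivative satisfies $\partial_s^+g(s)=\partial_s^+\gamma(s)-\Phi'(s)=0$ for every $s\in I$.

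The heart of the argument is the following claim: any continuous $g:I\to E$ with $\partial_s^+g(s)=0$ for all $s\in I$ is constant. I would prove this directly. Fix $s_1<s_2$ in $I$ and $\varepsilon>0$, and set
\begin{equation*}
A:=\{t\in[s_1,s_2]:\|g(t)-g(s_1)\|\leq\varepsilon(t-s_1)\}.
\end{equation*}
Then $s_1\in A$, and $A$ is closed in $[s_1,s_2]$ because $g$ is continuous. Let $t^*:=\sup A\in A$. If $t^*<s_2$, the vanishing right derivative of $g$ at $t^*$ supplies $\delta>0$ (with $t^*+\delta\leq s_2$) such that $\|g(t^*+h)-g(t^*)\|\leq\varepsilon h$ for every $h\in[0,\delta]$. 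For such $h$ the triangle inequality gives
\begin{equation*}
\|g(t^*+h)-g(s_1)\|\leq\varepsilon h+\varepsilon(t^*-s_1)=\varepsilon(t^*+h-s_1),
\end{equation*}
so $t^*+\delta\in A$, contradicting $t^*=\sup A$. Therefore $t^*=s_2$, which yields $\|g(s_2)-g(s_1)\|\leq\varepsilon(s_2-s_1)$; letting $\varepsilon\to0^+$ gives $g(s_2)=g(s_1)$.

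Applied to our $g=\gamma-\Phi$, this shows $\gamma(s)=g(s_0)+\Phi(s)$ for every $s\in I$. Hence $\gamma$ inherits the $C^1$ regularity of $\Phi$ and satisfies $\partial_s\gamma(s)=\Phi'(s)=\partial_s^+\gamma(s)$, which is the desired conclusion. The only genuinely delicate step is the "vanishing right derivative implies constant" claim; the main obstacle is handling it directly in the Banach-valued setting, which is why I prefer the closed-set/supremum argument above rather than reducing to the real-valued case via Hahn--Banach (although that route also works by composing $g$ with an arbitrary $\phi\in E^*$ and invoking the classical scalar statement).
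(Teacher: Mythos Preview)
Your argument is correct. The paper does not give its own proof of this proposition; it is quoted verbatim as \cite[Lemma~6.6]{jarohs2020new} and used as a black box, so there is nothing in the present paper to compare your approach against. Your route---building a Bochner antiderivative $\Phi$ of the continuous right derivative, then showing via the closed-set/supremum trick that the continuous curve $g=\gamma-\Phi$ with $\partial_s^+g\equiv0$ is constant---is a standard and complete proof of this classical fact. The delicate step you flag is handled cleanly: continuity of $g$ makes $A$ closed, and the vanishing right derivative at $t^*$ pushes the supremum strictly to the right, yielding the contradiction.
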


	\section{Differentiability of the solution map in $(0,1)$}\label{differentiability-for-general-value-s}
	
	In this section, we are concerned with the regularity of the map $(0,1)\rightarrow L^2(\Omega), s\mapsto u_s$, with being $u_s$ the unique weak solution of \eqref{poisson-problem-for-regional-fractional-laplacian}. In order to obtain the regularity of the solution $u_s$, regarded as function of $s$, our strategy consist to bound uniformly the difference quotient $\frac{u_{s+\sigma}-u_s}{\sigma}$ in the Hilbert space $H^s(\Omega)$ with respect to $\sigma$, after what, due to compactness, we therefore reach our goal.
	
	We restate Theorem \ref{differentiability-of-u_s} from the introduction for the reader's convenience.
	\begin{thm}
		Let $f\in L^{\infty}(\Omega)$ with $\int_{\Omega}f\ dx=0$ and let $u_s\in\mathbb{X}^s(\Omega)$ be the unique weak solution of \eqref{poisson-problem-for-regional-fractional-laplacian}. Then the map
		\begin{equation*}
		(0,1)\rightarrow L^2(\Omega),\quad s\mapsto u_s
		\end{equation*}
		is of class $C^1$ and $w_s:=\partial_su_s$ uniquely solves in the weak sense the equation
		\begin{equation}\label{s-derivative-equation}
		(-\Delta)^s_{\Omega}w_s=M^s_{\Omega}u_s\quad\text{in}\quad\Omega,
		\end{equation}
		where for every $x\in\Omega$,
		\begin{equation*}
		M^s_{\Omega}u(x)=-\frac{\partial_sC_{N,s}}{C_{N,s}}f(x)+2C_{N,s}P.V.\int_{\Omega}\frac{(u(x)-u(y))}{|x-y|^{N+2s}}\log|x-y|\ dy.
		\end{equation*}
	\end{thm}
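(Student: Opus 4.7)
The plan is to implement the differentiation-of-the-bilinear-form strategy announced in the introduction. Fix $s\in(0,1)$ and, for $\sigma$ small enough that $s+\sigma\in(0,1)$, set $v_\sigma:=(u_{s+\sigma}-u_s)/\sigma$. By Proposition \ref{higher-sobolev-regularity} one has $u_{s+\sigma}\in H^{s+\epsilon}(\Omega)\subset H^s(\Omega)$ uniformly in $\sigma\in(-s_0,s_0)$, so $v_\sigma\in\mathbb{X}^s(\Omega)$. Since $u_{s+\sigma}$ and $u_s$ solve the Poisson problem against the same datum $f$, for every smooth $\phi\in C^\infty_0(\overline\Omega)$ one has $\mathcal{E}_{s+\sigma}(u_{s+\sigma},\phi)=\int_\Omega f\phi\,dx=\mathcal{E}_s(u_s,\phi)$, whence
\begin{equation*}
\mathcal{E}_s(v_\sigma,\phi)=\frac{\mathcal{E}_s(u_{s+\sigma},\phi)-\mathcal{E}_{s+\sigma}(u_{s+\sigma},\phi)}{\sigma}=-\frac{1}{\sigma}\int_s^{s+\sigma}\partial_\tau\mathcal{E}_\tau(u_{s+\sigma},\phi)\,d\tau,
\end{equation*}
where $\partial_\tau\mathcal{E}_\tau(u,\phi)$ comes from differentiating the kernel $C_{N,\tau}|x-y|^{-N-2\tau}$ in $\tau$ and splits into a scale-factor term $\tfrac{\partial_\tau C_{N,\tau}}{C_{N,\tau}}\mathcal{E}_\tau(u,\phi)$ and the log-weighted bilinear form $-C_{N,\tau}\int_\Omega\int_\Omega(u(x)-u(y))(\phi(x)-\phi(y))|x-y|^{-N-2\tau}\log|x-y|\,dx\,dy$.

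The heart of the proof is a uniform $H^s(\Omega)$-estimate on $v_\sigma$. Testing with $\phi=v_\sigma$ (legal after a density argument, since $v_\sigma\in\mathbb{X}^s(\Omega)\cap\mathbb{X}^{s+\sigma}(\Omega)$ by Proposition \ref{higher-sobolev-regularity}) produces $\tfrac{C_{N,s}}{2}|v_\sigma|_{H^s(\Omega)}^2$ on the left. On the right, the non-log contribution is controlled by $\|u_{s+\sigma}\|_{H^{s+\epsilon}(\Omega)}\leq K$ of Proposition \ref{higher-sobolev-regularity} and the continuous embedding $H^{s+\epsilon}\hookrightarrow H^s$. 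For the log contribution, \eqref{logarithmic-decays} applied with $\epsilon_0\in(0,\epsilon)$ small shows that $|x-y|^{\epsilon_0}|\log|x-y||$ is bounded on $\Omega\times\Omega$; a Cauchy-Schwarz splitting that loads the $\epsilon_0$-gain onto the $u_{s+\sigma}$-factor then yields an estimate of the form $K\,|v_\sigma|_{H^s(\Omega)}$. Absorbing this factor into the left-hand side and invoking the Poincaré inequality \eqref{Poincare-inequality} gives the desired uniform bound $\|v_\sigma\|_{H^s(\Omega)}\leq C$ for small $|\sigma|$.

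Given this uniform bound, weak $H^s$-compactness combined with Rellich-Kondrachov furnishes a subsequential $L^2(\Omega)$-limit $w_s\in\mathbb{X}^s(\Omega)$. Passing to the limit $\sigma\to 0$ in the displayed identity, using continuity of $s\mapsto C_{N,s}$, $L^2$-continuity of $s\mapsto u_s$, and dominated convergence controlled by the same log-weighted estimate, yields
\begin{equation*}
\mathcal{E}_s(w_s,\phi)=-\partial_s\mathcal{E}_s(u_s,\phi)=-\tfrac{\partial_s C_{N,s}}{C_{N,s}}\int_\Omega f\phi\,dx+C_{N,s}\int_\Omega\int_\Omega\frac{(u_s(x)-u_s(y))(\phi(x)-\phi(y))}{|x-y|^{N+2s}}\log|x-y|\,dx\,dy
\end{equation*}
for all $\phi\in\mathbb{X}^s(\Omega)$. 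The standard symmetrization $x\leftrightarrow y$ applied to the principal-value integral defining $M^s_\Omega u_s$ recasts this as $\mathcal{E}_s(w_s,\phi)=\int_\Omega M^s_\Omega u_s\,\phi\,dx$, so $w_s$ is the unique weak solution of \eqref{s-derivative-equation}. Uniqueness upgrades the subsequential convergence to convergence of the full family, giving right-differentiability $\partial_s^+u_s=w_s$; the analogous argument with $\sigma<0$ produces two-sided differentiability. Continuity of $s\mapsto w_s$ in $L^2(\Omega)$ follows from a parallel stability estimate applied to \eqref{s-derivative-equation} together with continuity of $s\mapsto u_s$ and $s\mapsto C_{N,s}$, and Proposition \ref{diff-of-the-curve} then promotes right-differentiability with continuous right-derivative to the claimed $C^1$-regularity.

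The main obstacle will be the uniform $H^s$-bound on the difference quotient $v_\sigma$: the kernel $|x-y|^{-N-2\tau}\log|x-y|$ arising in $\partial_\tau\mathcal{E}_\tau$ is borderline-critical for the Gagliardo seminorm, and its absorption depends crucially on the fractional Sobolev gain $\epsilon>0$ from Proposition \ref{higher-sobolev-regularity} together with the decay estimate \eqref{logarithmic-decays}. The same interplay underlies both the passage to the limit in the bilinear identity and the continuity check needed to apply Proposition \ref{diff-of-the-curve}.
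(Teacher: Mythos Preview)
Your proposal is correct and follows essentially the same strategy as the paper: the paper likewise derives an identity for $\cE_s(v_\sigma,\phi)$ (via the equivalent formula $|x-y|^{-2\sigma}-1=-2\sigma\psi_\sigma(x,y)\log|x-y|$ in place of your integral representation), tests with $v_\sigma$ by density to extract the uniform $H^s$-bound from Proposition~\ref{higher-sobolev-regularity} and \eqref{logarithmic-decays}, and then runs the three steps continuity/right-differentiability/continuity-of-right-derivative before invoking Proposition~\ref{diff-of-the-curve}. Two small remarks: you use $L^2$-continuity of $s\mapsto u_s$ without proving it, whereas the paper establishes this separately as its Step~1; on the other hand, your uniqueness-of-the-limit-equation argument for upgrading subsequential to full convergence is cleaner than the paper's two-subsequence computation.
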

	\begin{proof}
		The proof is devided into three steps.\\
		
		\textbf{Step 1.} We prove that the solution map $(0,1)\to L^2(\Omega), s\mapsto u_s$ is continuous.
		
		Fix $s_0\in(0,1)$. Let $\delta\in(0,s_0)$ and $(s_n)_n\subset(s_0-\delta,1)$ be a sequence such that $s_n\to s_0$. We want to show that
		\begin{equation}\label{z1}
		u_{s_n}\to u_{s_0}\quad\text{in}~~L^2(\Omega)\quad\text{as}\quad\quad n\to\infty.
		\end{equation}
		%In order to prove \eqref{z1}, we argue as in \cite[Lemma 5.1]{jarohs2020new}. 
		
		Put $s':=\inf_{n\in\N}s_n>s_0-\delta$. Then,
		\begin{align}\label{z2}
		\nonumber|u_{s_n}|^2_{H^{s'}(\Omega)}&=\int_{\Omega}\int_{\Omega}\frac{(u_{s_n}(x)-u_{s_n}(y))^2}{|x-y|^{N+2s'}}\ dxdy\\
		\nonumber&=\int_{\Omega}\int_{\Omega}\frac{(u_{s_n}(x)-u_{s_n}(y))^2}{|x-y|^{N+2s_n}}|x-y|^{2(s_n-s')}\ dxdy\\
		&\leq d^{2(s_n-s')}_{\Omega}|u_{s_n}|^2_{H^{s_n}(\Omega)}\leq c|u_{s_n}|^2_{H^{s_n}(\Omega)}.
		\end{align}
		Now, since $u_{s_n}$ is the unique weak solution to $(-\Delta)^{s_n}_{\Omega}u_{s_n}=f$ in $\Omega$, then it follows that (we use $u_{s_n}$ as a test function)
		\begin{align}\label{z3}
		|u_{s_n}|^2_{H^{s_n}(\Omega)}=\frac{2}{C_{N,s_n}}\int_{\Omega}fu_{s_n}\ dx\leq\frac{2}{C_{N,s_n}}\|f\|_{L^2(\Omega)}\|u_{s_n}\|_{L^2(\Omega)}\leq \frac{2c_{s'}}{C_{N,s_n}}\|f\|_{L^2(\Omega)}|u_{s_n}|_{H^{s'}(\Omega)}
		\end{align}
		thanks to fractional Poincar\'{e} inequality \eqref{Poincare-inequality}. Using that $s\mapsto C_{N,s}$ is continuous, then it follows from \eqref{z3} and \eqref{z2} that
		\begin{equation}\label{z4}
		|u_{s_n}|_{H^{s'}(\Omega)}\leq c\quad\quad\text{as}\quad n\to\infty.
		\end{equation}
		This means that $(u_{s_n})_n$ is uniformly bounded in $H^{s'}(\Omega)$. Then there is $u_{*}\in H^{s'}(\Omega)$ such that after passing to a subsequence, 
		\begin{equation}\label{z5}
		\begin{aligned}
		&u_{s_n}\rightharpoonup u_{*}\quad\text{weakly in}~~H^{s'}(\Omega),\\
		&u_{s_n}\rightarrow u_{*}\quad\text{strongly in}~~L^2(\Omega),\\
		&u_{s_n}\rightarrow u_{*}\quad\text{a.e. in}~~\Omega. 
		\end{aligned}
		\end{equation}
		In particular, $\int_{\Omega}u_{*}\ dx=0$. We wish now to show that $u_{s_0}\equiv u_{*}$. To this end, we first prove that $u_{*}\in H^{s_0}(\Omega)$.
		
		By Fatou's Lemma, we have 
		\begin{align}\label{z6}
		\nonumber|u_{*}|^2_{H^{s_0}(\Omega)}&=\int_{\Omega}\int_{\Omega}\frac{(u_{*}(x)-u_{*}(y))^2}{|x-y|^{N+2s_0}}\ dxdy\leq\liminf_{n\to\infty}\int_{\Omega}\int_{\Omega}\frac{(u_{s_n}(x)-u_{s_n}(y))^2}{|x-y|^{N+2s_n}}\ dxdy\\
		&=\liminf_{n\to\infty}|u_{s_n}|^2_{H^{s_n}(\Omega)}=\frac{2}{C_{N,s_0}}\|u_{*}\|_{L^2(\Omega)}\|f\|_{L^2(\Omega)}<\infty.
		\end{align}
		This implies that $u_{*}\in H^{s_0}(\Omega)$. We recall that in \eqref{z6}, we have used \eqref{z3} and \eqref{z5}.
		
		On the other hand, for all $\phi\in C^{\infty}_0(\ov\Omega)$, we have
		\begin{align*}
		\langle f,\phi\rangle_{L^2(\Omega)}=\int_{\Omega}f\phi\ dx&=\lim\limits_{n\to\infty}\cE_{s_n}(u_{s_n},\phi)=\lim\limits_{n\to\infty}\int_{\Omega}u_{s_n}(-\Delta)^{s_n}_{\Omega}\phi\ dx\\
		&=\int_{\Omega}u_{*}(-\Delta)^{s_0}_{\Omega}\phi\ dx=\cE_{s_0}(u_{*},\phi).
		\end{align*}
		This shows that $u_{*}\in H^{s_0}(\Omega)$ with $\int_{\Omega}u_{*}\ dx=0$ distributionaly solves the Poisson problem $(-\Delta)^{s_0}_{\Omega}u_{*}=f$ in $\Omega$. Recalling that $u_{s_0}\in H^{s_0}(\Omega)$ with $\int_{\Omega}u_{s_0}\ dx=0$ is the unique weak (distributional) solution to the Poisson problem $(-\Delta)^{s_0}_{\Omega}u_{s_0}=f$ in $\Omega$, we find that $u_{*}\equiv u_{s_0}$, as wanted.\\
		
		\textbf{Step 2.} We show that the solution map $(0,1)\to L^2(\Omega),~s\mapsto u_s$ is right differentiable.\\
		
		Fix $s\in (0,1)$ and define
		\begin{equation}\label{v-sigma}
		v_{\sigma}=\frac{u_{s+\sigma}-u_s}{\sigma}.
		\end{equation}
		Here, $u_{s+\sigma}$ is the unique weak solution of \eqref{poisson-problem-for-regional-fractional-laplacian} with $s$ replaced by $s+\sigma$. We wish first to study the asymptotic behavior of $v_{\sigma}$ as $\sigma\rightarrow0^+$. 
		
		For all $\phi\in C^{\infty}_0(\overline{\Omega})$,
		\begin{align*}
		\cE_s(u_s,\phi)=\int_{\Omega}f\phi\ dx=\cE_{s+\sigma}(u_{s+\sigma},\phi)=\cE_s(u_s-u_{s+\sigma},\phi)+\cE_s(u_{s+\sigma},\phi)
		\end{align*}
		that is
		\begin{align}\label{n0}
		\cE_s(u_s-u_{s+\sigma},\phi)=\cE_{s+\sigma}(u_{s+\sigma},\phi)-\cE_s(u_{s+\sigma},\phi).
		\end{align}
		Now,
		\begin{align}\label{n1}
		\nonumber&\cE_{s+\sigma}(u_{s+\sigma},\phi)-\cE_s(u_{s+\sigma},\phi)\\
		\nonumber&=\frac{C_{N,s+\sigma}}{2}\int_{\Omega}\int_{\Omega}\frac{(u_{s+\sigma}(x)-u_{s+\sigma}(y))(\phi(x)-\phi(y))}{|x-y|^{N+2s+2\sigma}}\ dxdy\\
		\nonumber&\ \ \ \ \ \ \ \ \ \ \ \ \ \ \ -\frac{C_{N,s}}{2}\int_{\Omega}\int_{\Omega}\frac{(u_{s+\sigma}(x)-u_{s+\sigma}(y))(\phi(x)-\phi(y))}{|x-y|^{N+2s}}\ dxdy\\
		\nonumber&=\frac{C_{N,s+\sigma}-C_{N,s}}{2}\int_{\Omega}\int_{\Omega}\frac{(u_{s+\sigma}(x)-u_{s+\sigma}(y))(\phi(x)-\phi(y))}{|x-y|^{N+2s+2\sigma}}\ dxdy\\
		\nonumber&+\frac{C_{N,s}}{2}\int_{\Omega}\int_{\Omega}\Big(\frac{1}{|x-y|^{N+2s+2\sigma}}-\frac{1}{|x-y|^{N+2s}}\Big)(u_{s+\sigma}(x)-u_{s+\sigma}(y))(\phi(x)-\phi(y))\ dxdy\\
		\nonumber&=\frac{1}{C_{N,s+\sigma}}\times(C_{N,s+\sigma}-C_{N,s}) \cE_{s+\sigma}(u_{s+\sigma},\phi)\\
		\nonumber&\ \ \ \ \ \ \ +\frac{C_{N,s}}{2}\int_{\Omega}\int_{\Omega}\Big(|x-y|^{-2\sigma}-1\Big)\frac{(u_{s+\sigma}(x)-u_{s+\sigma}(y))(\phi(x)-\phi(y))}{|x-y|^{N+2s}}\ dxdy\\
		\nonumber&=\frac{1}{C_{N,s+\sigma}}\times(C_{N,s+\sigma}-C_{N,s})\int_{\Omega}f\phi\ dx\\
		&\ \ \ \ \ \ \ +\frac{C_{N,s}}{2}\int_{\Omega}\int_{\Omega}\Big(|x-y|^{-2\sigma}-1\Big)\frac{(u_{s+\sigma}(x)-u_{s+\sigma}(y))(\phi(x)-\phi(y))}{|x-y|^{N+2s}}\ dxdy.
		\end{align}
		Next, we write
		\begin{align}\label{n2}
		\nonumber|x-y|^{-2\sigma}-1&=\exp(-2\sigma\log|x-y|)-1=-2\sigma\log|x-y|\int_{0}^{1}\exp(-2t\sigma\log|x-y|)\ dt\\
		&=-2\sigma\psi_{\sigma}(x,y)\log|x-y|\quad\text{with}~~\psi_{\sigma}(x,y)=\int_{0}^{1}\exp(-2t\sigma\log|x-y|)\ dt.
		\end{align}
		Plugging \eqref{n2} into \eqref{n1}, we get
		\begin{align}\label{n3}
		\nonumber&\cE_{s+\sigma}(u_{s+\sigma},\phi)-\cE_s(u_{s+\sigma},\phi)\\
		\nonumber&=\frac{1}{C_{N,s+\sigma}}\times(C_{N,s+\sigma}-C_{N,s})\int_{\Omega}f\phi\ dx\\
		&-\sigma C_{N,s}\int_{\Omega}\int_{\Omega}\frac{(u_{s+\sigma}(x)-u_{s+\sigma}(y))(\phi(x)-\phi(y))}{|x-y|^{N+2s}}\psi_{\sigma}(x,y)\log|x-y|\ dxdy.
		\end{align}
		Equations \eqref{n0} and \eqref{n3} yield
		\begin{align}\label{n4}
		\nonumber\cE_s(v_{\sigma},\phi)&=-\frac{1}{C_{N,s+\sigma}}\times\frac{C_{N,s+\sigma}-C_{N,s}}{\sigma}\int_{\Omega}f\phi\ dx\\
		&+ C_{N,s}\int_{\Omega}\int_{\Omega}\frac{(u_{s+\sigma}(x)-u_{s+\sigma}(y))(\phi(x)-\phi(y))}{|x-y|^{N+2s}}\psi_{\sigma}(x,y)\log|x-y|\ dxdy
		\end{align}
		for all $\phi\in C^{\infty}_0(\ov\Omega)$. Now, by density, there is $\phi_n\in C^{\infty}_0(\ov\Omega)$ such that $\phi_n\to v_{\sigma}$ in $H^{s+\epsilon}(\Omega)$ for $\epsilon>0$. Moreover, from \eqref{n3},
		\begin{align}\label{n5}
		\nonumber\cE_s(v_{\sigma},\phi_n)&=-\frac{1}{C_{N,s+\sigma}}\times\frac{C_{N,s+\sigma}-C_{N,s}}{\sigma}\int_{\Omega}f\phi_n\ dx\\
		&+ C_{N,s}\int_{\Omega}\int_{\Omega}\frac{(u_{s+\sigma}(x)-u_{s+\sigma}(y))(\phi_n(x)-\phi_n(y))}{|x-y|^{N+2s}}\psi_{\sigma}(x,y)\log|x-y|\ dxdy.
		\end{align}
		Using Cauchy-Schwarz inequality,
		\begin{align}\label{n6}
		\nonumber&|\cE_s(v_{\sigma},\phi_n)-\cE_s(v_{\sigma},v_{\sigma})|\\
		\nonumber&\leq\frac{C_{N,s}}{2}\int_{\Omega}\int_{\Omega}\frac{|v_{\sigma}(x)-v_{\sigma}(y)||(\phi_n(x)-\phi_n(y))-(v_{\sigma}(x)-v_{\sigma}(y))|}{|x-y|^{N+2s}}\ dxdy\\
		&\leq\frac{C_{N,s}}{2}|v_{\sigma}|_{H^s(\Omega)}|\phi_n-v_{\sigma}|_{H^s(\Omega)}\to0\quad\quad\text{as}~~n\to\infty.
		\end{align}
		This implies that
		\begin{equation}\label{n7}
		\cE_s(v_{\sigma},\phi_n)\to\cE_s(v_{\sigma},v_{\sigma})\quad\quad\text{as}~~n\to\infty.
		\end{equation}
		Since also $\phi_n\to v_{\sigma}$ in $L^{2}(\Omega)$, thanks to Poincar\'{e} ineqality, then
		\begin{equation}\label{n8}
		\int_{\Omega}f\phi_n\ dx\to\int_{\Omega}fv_{\sigma}\ dx.
		\end{equation}
		On the other hand, using that 
		\begin{equation}\label{n9}
		|\psi_{\sigma}(x,y)|\leq\max\{1,\exp(-2\sigma\log|x-y|)\}
		\end{equation}
		then applying again Cauchy-Schwarz inequality, one can also show that
		\begin{align}\label{n10}
		\nonumber&\int_{\Omega}\int_{\Omega}\frac{(u_{s+\sigma}(x)-u_{s+\sigma}(y))(\phi_n(x)-\phi_n(y))}{|x-y|^{N+2s}}\psi_{\sigma}(x,y)\log|x-y|\ dxdy\\
		&\to\int_{\Omega}\int_{\Omega}\frac{(u_{s+\sigma}(x)-u_{s+\sigma}(y))(v_{\sigma}(x)-v_{\sigma}(y))}{|x-y|^{N+2s}}\psi_{\sigma}(x,y)\log|x-y|\ dxdy~~~~\text{as}~~n\to\infty.
		\end{align}
		Combining \eqref{n7}, \eqref{n8} and \eqref{n10}, then from \eqref{n5} it follows that
		\begin{align}\label{n11}
		\nonumber\cE_s(v_{\sigma},v_{\sigma})&=-\frac{1}{C_{N,s+\sigma}}\times\frac{C_{N,s+\sigma}-C_{N,s}}{\sigma}\int_{\Omega}fv_{\sigma}\ dx\\
		&+ C_{N,s}\int_{\Omega}\int_{\Omega}\frac{(u_{s+\sigma}(x)-u_{s+\sigma}(y))(v_{\sigma}(x)-v_{\sigma}(y))}{|x-y|^{N+2s}}\psi_{\sigma}(x,y)\log|x-y|\ dxdy.
		\end{align}
		From \eqref{n11}, we write
		\begin{align}\label{n12}
		\nonumber\frac{C_{N,s}}{2}|v_{\sigma}|^2_{H^s(\Omega)}&=\cE_s(v_{\sigma},v_{\sigma})\leq\frac{1}{C_{N,s+\sigma}}\Big|\frac{C_{N,s+\sigma}-C_{N,s}}{\sigma}\Big|\int_{\Omega}|f||v_{\sigma}|\ dx\\
		&+ C_{N,s}\int_{\Omega}\int_{\Omega}\frac{|u_{s+\sigma}(x)-u_{s+\sigma}(y)||v_{\sigma}(x)-v_{\sigma}(y)|}{|x-y|^{N+2s}}|\psi_{\sigma}(x,y)||\log|x-y||\ dxdy.
		\end{align}
		Since the map $(0,1)\ni s\mapsto C_{N,s}$ is of class $C^1$, then
		\begin{align}\label{n13}
		\frac{1}{C_{N,s+\sigma}}\Big|\frac{C_{N,s+\sigma}-C_{N,s}}{\sigma}\Big|\leq\frac{|\partial_sC_{N,s}|}{C_{N,s}}+o(1)\quad\quad\text{as}~~\sigma\to0^+.
		\end{align}
		Now, H\"{o}lder inequality and Poincar\'{e} inequality (see \eqref{Poincare-inequality}) yield
		\begin{align}\label{n14}
		\int_{\Omega}|f||v_{\sigma}|\ dx\leq\|f\|_{L^2(\Omega)}\|v_{\sigma}\|_{L^2(\Omega)}\leq C(N,s,\Omega,\|f\|_{L^2(\Omega)})|v_{\sigma}|_{H^s(\Omega)}.
		\end{align}
		On the other hand, from \eqref{n9}, we have
		\begin{align*}
		&\int_{\Omega}\int_{\Omega}\frac{|u_{s+\sigma}(x)-u_{s+\sigma}(y)||v_{\sigma}(x)-v_{\sigma}(y)|}{|x-y|^{N+2s}}|\psi_{\sigma}(x,y)||\log|x-y||\ dxdy\\
		&\leq\int_{\Omega}\int_{\Omega}\frac{|u_{s+\sigma}(x)-u_{s+\sigma}(y)||v_{\sigma}(x)-v_{\sigma}(y)|}{|x-y|^{N+2s}}|\log|x-y||\ dxdy\\
		&\ \ \ \ +\int_{\Omega}\int_{\Omega}\frac{|u_{s+\sigma}(x)-u_{s+\sigma}(y)||v_{\sigma}(x)-v_{\sigma}(y)|}{|x-y|^{N+2s+2\sigma}}|\log|x-y||\ dxdy.
		\end{align*}
		To estimate the first term on the RHS of the above inequality, we use Cauchy-Schwarz inequality together with the Logarithmic decay \eqref{logarithmic-decays}:
		\begin{align}\label{n15}
		\nonumber&\int_{\Omega}\int_{\Omega}\frac{|u_{s+\sigma}(x)-u_{s+\sigma}(y)||v_{\sigma}(x)-v_{\sigma}(y)|}{|x-y|^{N+2s}}|\log|x-y||\ dxdy\\
		\nonumber&\leq\Big(\int_{\Omega}\int_{\Omega}\frac{|u_{s+\sigma}(x)-u_{s+\sigma}(y)|^2}{|x-y|^{N+2s}}|\log|x-y||^2\ dxdy\Big)^{1/2}|v_{\sigma}|_{H^s(\Omega)}\\
		\nonumber&\leq\Bigg(\frac{1}{(e\epsilon_0)^2}\int_{\Omega}\int_{\Omega}\frac{|u_{s+\sigma}(x)-u_{s+\sigma}(y)|^2}{|x-y|^{N+2s}}(|x-y|^{-2\epsilon_0}+|x-y|^{2\epsilon_0})\ dxdy\Bigg)^{1/2}|v_{\sigma}|_{H^s(\Omega)}\\
		\nonumber&\leq c(\epsilon_0)\Bigg(\int_{\Omega}\int_{\Omega}\frac{|u_{s+\sigma}(x)-u_{s+\sigma}(y)|^2}{|x-y|^{N+2s+2\epsilon_0}}\ dxdy+d^{2\epsilon_0}_{\Omega}|u_{s+\sigma}|^2_{H^s(\Omega)}\Bigg)^{1/2}|v_{\sigma}|_{H^s(\Omega)}\\
		&=c(\epsilon_0)\Big(|u_{s+\sigma}|^2_{H^{s+\epsilon_0}(\Omega)}+d^{2\epsilon_0}_{\Omega}|u_{s+\sigma}|^2_{H^s(\Omega)}\Big)^{1/2}|v_{\sigma}|_{H^s(\Omega)}.
		\end{align}
		By Proposition \ref{higher-sobolev-regularity} there exist $K_1, K_2>0$ independent on $\sigma$ such that
		\begin{equation}\label{n16}
		|u_{s+\sigma}|_{H^{s+\epsilon_0}(\Omega)}\leq K_1\quad\quad\text{and}\quad\quad |u_{s+\sigma}|_{H^s(\Omega)}\leq K_2\quad\text{for}~~\sigma~~\text{sufficiently small}.
		\end{equation}
		Combining this with \eqref{n15}, we obtain that
		\begin{equation}\label{n20}
		\int_{\Omega}\int_{\Omega}\frac{|u_{s+\sigma}(x)-u_{s+\sigma}(y)||v_{\sigma}(x)-v_{\sigma}(y)|}{|x-y|^{N+2s}}|\log|x-y||\ dxdy\leq c|v_{\sigma}|_{H^s(\Omega)}.
		\end{equation}
		By a similar argument as above, we also obtain the following bound
		\begin{equation}\label{n21}
		\int_{\Omega}\int_{\Omega}\frac{|u_{s+\sigma}(x)-u_{s+\sigma}(y)||v_{\sigma}(x)-v_{\sigma}(y)|}{|x-y|^{N+2s+2\sigma}}|\log|x-y||\ dxdy\leq c|v_{\sigma}|_{H^s(\Omega)}.
		\end{equation}
		Combining \eqref{n13}, \eqref{n14}, \eqref{n20}, and \eqref{n21}, we find from \eqref{n12} that
		\begin{equation}\label{n22}
		|v_{\sigma}|_{H^s(\Omega)}\leq c.
		\end{equation}
		In other words, $v_{\sigma}$ is uniformly bounded in $H^s(\Omega)$ with respect to $\sigma$. Therefore, after passing to a subsequence, there is $w_s\in H^s(\Omega)$ such that 
		\begin{equation}\label{n23}
		\begin{aligned}
		&v_{\sigma}\rightharpoonup w_s\quad\text{weakly in}~~H^s(\Omega),\\
		&v_{\sigma}\rightarrow w_s\quad\text{strongly in}~~L^2(\Omega),\\
		&v_{\sigma}\rightarrow w_s\quad\text{a.e. in}~~\Omega. 
		\end{aligned}
		\end{equation}
		In particular, $\int_{\Omega}w_s\ dx=0$ since does $v_{\sigma}$. \\
		
		To obtain the right-differentiability of the solution map $s\mapsto u_s$, it suffices to show that $w_s$ is unique as a limit of the whole sequence $v_{\sigma}$.
		
		First of all, from \eqref{n4}, thanks to Proposition \ref{higher-sobolev-regularity} and Dominated Convergence Theorem, we deduce that $w_s$ solves
		\begin{align}\label{n29}
		\nonumber\cE_s(w_s,\phi)&=-\frac{\partial_sC_{N,s}}{C_{N,s}}\int_{\Omega}f\phi\ dx\\
		&+ C_{N,s}\int_{\Omega}\int_{\Omega}\frac{(u_s(x)-u_s(y))(\phi(x)-\phi(y))}{|x-y|^{N+2s}}\log|x-y|\ dxdy
		\end{align}
		for all $\phi\in C^{\infty}_0(\ov\Omega)$.
		
		Let denote by $v_{\sigma_k}$ the corresponding subsequence of $v_{\sigma}$ for which \eqref{n23} holds. Consider now another subsequence $v_{\sigma_i}$ with $v_{\sigma_i}\to \overline{w}_s$ for some $\overline{w}_s\in H^s(\Omega)$ with $\int_{\Omega}\overline{w}_s\ dx=0$. We wish to prove that prove that $w_s=\overline{w}_s$. Let set $W_s=w_s-\overline{w}_s$. Then, in particular, $\int_{\Omega}W_s\ dx=0$. Moreover, for all $\phi\in C^{\infty}_0(\ov\Omega)$, 
		\begin{align}\label{n30}
		\nonumber\cE_s(W_s,\phi)&=\cE_s(w_s-\overline{w}_s,\phi)=\cE_s(w_s,\phi)-\cE_s(\overline{w}_s,\phi)=\cE_s(w_s,\phi)-\lim\limits_{i\to\infty}\cE_s(v_{\sigma_i},\phi)\\
		\nonumber&=\cE_s(w_s,\phi)-\lim\limits_{i\to\infty}\cE_s(v_{\sigma_i}-w_s+w_s,\phi)=-\lim\limits_{i\to\infty}\cE_s(v_{\sigma_i}-w_s,\phi)\\
		&=-\lim\limits_{i\to\infty}\lim\limits_{k\to\infty}\cE_s(v_{\sigma_i}-v_{\sigma_k},\phi).
		\end{align}
		From \eqref{n4}, one gets 
		\begin{align*}
		\cE_s(v_{\sigma_i}-v_{\sigma_k},\phi)&=\Bigg(-\frac{1}{C_{N,s+\sigma_i}}\times\frac{C_{N,s+\sigma_i}-C_{N,s}}{\sigma_i}+\frac{1}{C_{N,s+\sigma_k}}\times\frac{C_{N,s+\sigma_k}-C_{N,s}}{\sigma_k}\Bigg)\int_{\Omega}f\phi\ dx\\
		&+ C_{N,s}\int_{\Omega}\int_{\Omega}\frac{(u_{s+\sigma_i}(x)-u_{s+\sigma_i}(y))(\phi(x)-\phi(y))}{|x-y|^{N+2s}}\psi_{\sigma_i}(x,y)\log|x-y|\ dxdy\\
		&-C_{N,s}\int_{\Omega}\int_{\Omega}\frac{(u_{s+\sigma_k}(x)-u_{s+\sigma_k}(y))(\phi(x)-\phi(y))}{|x-y|^{N+2s}}\psi_{\sigma_k}(x,y)\log|x-y|\ dxdy.
		\end{align*}
		Using that $s\mapsto C_{N,s}$ is of class $C^1$, the fact that $s\mapsto u_s$ is continuous and Proposition \ref{higher-sobolev-regularity}, the Dominated Convergence Theorem yields
		\begin{equation}
		\lim\limits_{i\to\infty}\lim\limits_{k\to\infty}\cE_s(v_{\sigma_i}-v_{\sigma_k},\phi)=0.
		\end{equation}
		Consequently, one gets from \eqref{n30} that
		\begin{equation}\label{n31}
		\cE_s(W_s,\phi)=0.
		\end{equation}
		By density, \eqref{n31} also holds with $\phi$ replaced by $W_s$, that is,
		\begin{equation}\label{n32}
		\cE_s(W_s,W_s)=0.
		\end{equation}
		This implies that $W_s=const$. Morever, since also $\int_{\Omega}W_s\ dx=0$, then we get that $W_s=0$, that is, $w_s=\overline{w}_s$ as wanted. In conclusion, the solution map $s\mapsto u_s$ is right-differentiable with $\partial_s^+u_s=w_s$,  solving uniquely \eqref{n29}.\\
		
		\textbf{Step 3.} We establishe the continuity of the map $s\mapsto\partial_s^+u_s=w_s$. The proof of this is similar to that in \textbf{Step 1} and we include it for the sake of completeness. 
		Fix again $s_0\in(0,1)$. Let $\delta\in(0,s_0)$ and $(s_n)_n\subset(s_0-\delta,1)$ be a sequence such that $s_n\to s_0$. By putting also $s':=\inf_{n\in\N}s_n>s_0-\delta$, then as in \eqref{z2} one get that
		\begin{equation}\label{w1}
		|w_{s_n}|^2_{H^{s'}(\Omega)}\leq c|w_{s_n}|^2_{H^{s_n}(\Omega)}.
		\end{equation}
		Now, from \eqref{n29}, it follows that
		\begin{align}\label{n35}
		\nonumber\cE_{s_n}(w_{s_n},w_{s_n})&=-\frac{\partial_{s_n}C_{N,s_n}}{C_{N,s_n}}\int_{\Omega}fw_{s_n}\ dx\\
		&+ C_{N,s_n}\int_{\Omega}\int_{\Omega}\frac{(u_{s_n}(x)-u_{s_n}(y))(w_{s_n}(x)-w_{s_n}(y))}{|x-y|^{N+2s_n}}\log|x-y|\ dxdy,
		\end{align}
		that is,
		\begin{align}\label{n36}
		\nonumber|w_{s_n}|^2_{H^{s_n}(\Omega)}&\leq\Big|\frac{2\partial_{s_n}C_{N,s_n}}{C^2_{N,s_n}}\Big|\int_{\Omega}|f||w_{s_n}|\ dx\\
		&+ 2\int_{\Omega}\int_{\Omega}\frac{|u_{s_n}(x)-u_{s_n}(y)||w_{s_n}(x)-w_{s_n}(y)|}{|x-y|^{N+2s_n}}|\log|x-y||\ dxdy.
		\end{align}
		Now, using that $s\mapsto C_{N,s}$ is of class $C^1$, then
		\begin{align}\label{n37}
		\Big|\frac{2\partial_{s_n}C_{N,s_n}}{C^2_{N,s_n}}\Big|\leq\Big|\frac{2\partial_{s_0}C_{N,s_0}}{C^2_{N,s_0}}\Big|+o(1)\quad\text{as}~~n\to\infty.
		\end{align}
		By H\"{o}lder inequality and Poincar\'{e} inequality \eqref{Poincare-inequality}, we get
		\begin{align}\label{n38}
		\int_{\Omega}|f||w_{s_n}|\ dx\leq c|w_{s_n}|_{H^{s_n}(\Omega)} 
		\end{align}
		On the other hand, Cauchy-Schwarz inequality together with \eqref{logarithmic-decays} yield
		\begin{align}\label{n39}
		\nonumber&\int_{\Omega}\int_{\Omega}\frac{|u_{s_n}(x)-u_{s_n}(y)||w_{s_n}(x)-w_{s_n}(y)|}{|x-y|^{N+2s_n}}|\log|x-y||\ dxdy\\
		&\leq\Big(c_{\epsilon_0}|u_{s_n}|^2_{H^{s_n+\epsilon_0}(\Omega)}+\tilde{c}_{\epsilon_0}|u_{s_n}|^2_{H^{s_n}(\Omega)}\Big)|w_{s_n}|_{H^{s_n}(\Omega)}.
		\end{align}
		By Proposition \ref{higher-sobolev-regularity}, we find that
		\begin{equation}
		|u_{s_n}|_{H^{s_n+\epsilon_0}(\Omega)}\leq C_1\quad\quad\text{and}\quad\quad|u_{s_n}|_{H^{s_n}(\Omega)}\leq C_2\quad\text{as}~~n\to\infty.
		\end{equation}
		Taking this into account, we get from \eqref{n39} that
		\begin{equation}\label{w2}
		\int_{\Omega}\int_{\Omega}\frac{|u_{s_n}(x)-u_{s_n}(y)||w_{s_n}(x)-w_{s_n}(y)|}{|x-y|^{N+2s_n}}|\log|x-y||\ dxdy\leq C|w_{s_n}|_{H^{s_n}(\Omega)}
		\end{equation}
		for $n$ sufficiently large.
		
		It follows from \eqref{n37}, \eqref{n38}, \eqref{w2} and \eqref{w1} that
		\begin{align}\label{n40}
		|w_{s_n}|_{H^{s'}(\Omega)}\leq|w_{s_n}|_{H^{s_n}(\Omega)}\leq c\quad\text{for}~~n~~\text{sufficiently large}.
		\end{align}
		This means that $w_{s_n}$ is uniformly bounded in $H^{s'}(\Omega)$ with respect to $n$. Therefore, up to a subsequence, there is $w_{*}\in H^{s'}(\Omega)$ such that
		\begin{equation}\label{n41}
		\begin{aligned}
		&w_{s_n}\rightharpoonup w_{*}\quad\text{weakly in}~~H^{s'}(\Omega),\\
		&w_{s_n}\rightarrow w_{*}\quad\text{strongly in}~~L^2(\Omega),\\
		&w_{s_n}\rightarrow w_{*}\quad\text{a.e. in}~~\Omega. 
		\end{aligned}
		\end{equation}
		In particular, we have $\int_{\Omega}w_{*}\ dx=0$. Next, we show that $w_{*}\equiv w_{s_0}$.
		
		By Fatou's Lemma,
		\begin{align*}
		|w_{*}|_{H^{s_0}(\Omega)}&=\Big(\int_{\Omega}\int_{\Omega}\frac{(w_{*}(x)-w_{*}(y))^2}{|x-y|^{N+2s_0}}\ dxdy\Big)^{1/2}\\
		&\leq\liminf_{n\to\infty}\Big(\int_{\Omega}\int_{\Omega}\frac{(w_{s_n}(x)-w_{s_n}(y))^2}{|x-y|^{N+2s_n}}\ dxdy\Big)^{1/2}\\
		&=\liminf_{n\to\infty}|w_{s_n}|_{H^{s_n}(\Omega)}\leq C<\infty.
		\end{align*}
		This implies that $w_{*}\in H^{s_0}(\Omega)$. Notice that we have used \eqref{n36}, \eqref{n37}, \eqref{n38}, \eqref{n39} and Proposition \ref{higher-sobolev-regularity}.
		
		On the other hand, for all $\phi\in C^{\infty}_0(\ov\Omega)$, we have
		\begin{align}\label{w3}
		\nonumber&\cE_{s_0}(w_{*},\phi)=\int_{\Omega}w_{*}(-\Delta)^{s_0}_{\Omega}\phi\ dx=\lim\limits_{n\to\infty}\int_{\Omega}w_{s_n}(-\Delta)^{s_n}_{\Omega}\phi\ dx=\lim\limits_{n\to\infty}\cE_{s_n}(w_{s_n},\phi)\\
		&=-\lim\limits_{n\to\infty}\frac{\partial_{s_n}C_{N,s_n}}{C_{N,s_n}}\int_{\Omega}f\phi\ dx+\lim\limits_{n\to\infty} C_{N,s_n}\int_{\Omega}\int_{\Omega}\frac{(u_{s_n}(x)-u_{s_n}(y))(\phi(x)-\phi(y))}{|x-y|^{N+2s_n}}\log|x-y|\ dxdy.
		\end{align}
		In \eqref{w3}, we have used \eqref{n29}. Now, by \textbf{Step 1} and Proposition \ref{higher-sobolev-regularity}, one obtains from \eqref{w3} that
		\begin{align}\label{n42}
		\cE_{s_0}(w_{*},\phi)=-\frac{\partial_{s_0}C_{N,s_0}}{C_{N,s_0}}\int_{\Omega}f\phi\ dx
		+ C_{N,s_0}\int_{\Omega}\int_{\Omega}\frac{(u_{s_0}(x)-u_{s_0}(y))(\phi(x)-\phi(y))}{|x-y|^{N+2s_0}}\log|x-y|\ dxdy
		\end{align}
		for all $\phi\in C^{\infty}_0(\ov\Omega)$. \\
		
		Since by \textbf{Step 2} $w_{s_0}\in H^{s_0}(\Omega)$ with $\int_{\Omega}w_{s_0}\ dx=0$ is the unique solution to \eqref{n42}, then one finds that $w_{*}\equiv w_{s_0}$. This yields the continuity of the map $s\mapsto w_s$ and this concudes the proof of \textbf{Step 3}.\\
		
		In summary, from \textbf{Steps 1, 2} and \textbf{3}, we have shown that
		\begin{itemize}
			\item [$(i)$] $s\mapsto u_s$ is continuous;
			\item [$(ii)$] $\partial_s^+u_s$ exists in $L^2(\Omega)$ for all $s\in (0,1)$;
			\item [$(iii)$] The map $(0,1)\to L^2(\Omega),~s\mapsto\partial_s^+u_s$ is continuous.
		\end{itemize}
		Therefore, by Proposition \ref{diff-of-the-curve}, we conclude that the solution map $(0,1)\to L^2(\Omega),~s\mapsto\partial_s^+u_s$ is continuously differentiable with $\partial_su_s=\partial_s^+u_s$. Moreover, from \eqref{n29}, we have that $w_s=\partial_su_s$ solves in weak sense the equation
		\begin{equation}
		(-\Delta)^s_{\Omega}w_s=M^s_{\Omega}u_s\quad\quad\text{in}~~\Omega
		\end{equation}
		with
		\begin{equation*}
		M^s_{\Omega}u(x)=-\frac{\partial_sC_{N,s}}{C_{N,s}}f(x)+2C_{N,s}P.V.\int_{\Omega}\frac{(u(x)-u(y))}{|x-y|^{N+2s}}\log|x-y|\ dy,~~x\in\Omega.
		\end{equation*}
		
	\end{proof}

	\section{Eigenvalues problem case}\label{eigenvalue-problem-case}
	The aim of this section is to study \eqref{poisson-problem-for-regional-fractional-laplacian} when $f=\lambda_su_s$ i.e., the eigenvalues problem
	\begin{equation}\label{eigenvalues-problem}
	(-\Delta)^s_{\Omega}u_s=\lambda_su_s\quad\text{in}\quad\Omega.
	\end{equation}
	More precisely, we discuss the $s$-dependence of the map $s\mapsto \lambda_{1,s}$ where $\lambda_{1,s}$ is the first nontrivial eigenvalue of $(-\Delta)^s_{\Omega}$. We notice that equation \eqref{eigenvalues-problem} is understood in weak sense. Here and throughout the end of this section, we fix $\Omega$ as a bounded domain with $C^{1,1}$ boundary.
	
	Let
	\begin{equation}
	0<\lambda_{1,s}\leq\lambda_{2,s}\leq\cdots\leq\lambda_{k,s}\leq\cdots,
	\end{equation}
	be the sequence of eigenvalues (counted with multiplicity) of $(-\Delta)^s_{\Omega}$ in $\Omega$ with corresponding eigenfunctions 
	$$\phi_{1,s}, \phi_{2,s},\dots, \phi_{k,s},\dots.$$ 
	It is known that the system $\{\phi_{i,s}\}_i$ form an $L^2$-orthonormal basis. Variationnaly, we have \begin{equation}\label{k-th-eigenvalue-of-regional-fractional-laplacian}
	\lambda_{k,s}:=\inf_{V\in V^s_k}\sup_{\phi\in S_{V}}\cE_s(\phi,\phi),
	\end{equation}
	where $V^s_k:=\{V\subset\mathbb{X}^s(\Omega):\dim V=k\}$ and $S_{V}:=\{\phi\in V:\|\phi\|_{L^2(\Omega)}=1\}$ for all $V\in V^s_k$. However, when $k=1$ then $\lambda_{1,s}$ is simply  characterized by (see e.g., \cite[Theorem 3.1]{del2015first})
	\begin{equation}\label{first-eigenvalue-of-regional-fractional-laplacian}
	\lambda_{1,s}:=\inf\{\cE_s(\phi,\phi):\phi\in\mathbb{X}^s(\Omega),\|\phi\|_{L^2(\Omega)}=1\}.
	\end{equation}
	
	In this section, we wish to study the differentiability of the map $(0,1)\ni s\mapsto \lambda_{1,s}$. As first remark, we know that the first nontrivial eigenvalue of $(-\Delta)^s_{\Omega}$ is not in general \textit{simple}. Therefore, the main focus here is one-sided differentiability.
	
	In what follows, we discuss the right differentiability of the map $s\mapsto\lambda_{1,s}$ stated in Theorem \ref{differentiability-of-eigenvalues-0}. For the reader's convenience, we restate it in the following. Here and throughout the end of this Section, we use respectively $\lambda_s$ and $u_s$ for $\lambda_{1,s}$ and $u_{1,s}$ to alleviate the notation. 
	\begin{thm}\label{differentiability-of-eigenvalues}
		Regarded as function of $s$, $\lambda_s$ is right differentiable on $(0,1)$ and
		\begin{equation}\label{C-1-regularity-of-eigenvalues}
		\partial^+_s\lambda_s:=\lim\limits_{\sigma\rightarrow0^+}\frac{\lambda_{s+\sigma}-\lambda_s}{\sigma}=\inf\{J_s(u):u\in \M_s\}
		\end{equation}
		where
		\begin{equation}
		J_s(u)=\frac{\partial_sC_{N,s}}{C_{N,s}}\lambda_s-C_{N,s}\int_{\Omega}\int_{\Omega}\frac{(u(x)-u(y))^2}{|x-y|^{N+2s}}\log|x-y|\ dxdy
		\end{equation}
		and $\M_s$ the set of $L^2$-normalized eigenfunctions of $(-\Delta)^s_{\Omega}$ corresponding to $\lambda_s$. 
		Moreover, the infimum in \eqref{C-1-regularity-of-eigenvalues} is attained.
	\end{thm}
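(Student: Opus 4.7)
The plan is to sandwich the difference quotient $(\lambda_{s+\sigma}-\lambda_s)/\sigma$ for $\sigma\to 0^+$ between a $\limsup$ that is bounded above by $J_s(u)$ for every $u\in\M_s$, and a $\liminf$ that is bounded below by $J_s(u_*)$ for some specific $u_*\in\M_s$ obtained as a subsequential limit of normalized eigenfunctions $u_{s+\sigma}\in\M_{s+\sigma}$. Combined, these will force the right derivative $\partial_s^+\lambda_s$ to exist, to equal $\inf_{\M_s}J_s$, and to be attained at $u_*$. The computational machinery mirrors the proof of Theorem \ref{differentiability-of-u_s}: the algebraic identity $|x-y|^{-2\sigma}-1=-2\sigma\psi_\sigma(x,y)\log|x-y|$ from \eqref{n2}, the $C^1$-regularity of $s\mapsto C_{N,s}$, the logarithmic decay \eqref{logarithmic-decays}, and the uniform Sobolev estimate of Proposition \ref{higher-sobolev-regularity} applied to eigenfunctions (for which the right-hand side $f=\lambda_s u_s$ lies in $L^\infty$ by \eqref{uniformly-boundedness-of-weak-solution}).

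For the upper bound, I fix $u\in\M_s$. By Proposition \ref{higher-sobolev-regularity} applied to $u$, one has $u\in H^{s+\epsilon}(\Omega)$, so $u$ is admissible in \eqref{first-eigenvalue-of-regional-fractional-laplacian} at level $s+\sigma$ for $\sigma<\epsilon$, giving $\lambda_{s+\sigma}\leq\cE_{s+\sigma}(u,u)$, while $\lambda_s=\cE_s(u,u)$ since $\|u\|_{L^2}=1$. Subtracting and carrying out the splitting \eqref{n1}--\eqref{n3} verbatim with $u$ in place of both $u_{s+\sigma}$ and $\phi$, then dividing by $\sigma$ and sending $\sigma\to 0^+$ via Dominated Convergence (justified by \eqref{logarithmic-decays} combined with $u\in H^{s+\epsilon}(\Omega)$) yields
\begin{equation*}
\limsup_{\sigma\to 0^+}\frac{\lambda_{s+\sigma}-\lambda_s}{\sigma}\leq J_s(u).
\end{equation*}
Taking the infimum over $u\in\M_s$ gives one direction.

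For the lower bound, I pick for each small $\sigma>0$ a normalized $u_{s+\sigma}\in\M_{s+\sigma}$. Proposition \ref{higher-sobolev-regularity} (applied to $u_{s+\sigma}$, with $f=\lambda_{s+\sigma}u_{s+\sigma}$) yields a uniform $H^{s+\epsilon}$-bound on $u_{s+\sigma}$; by Rellich compactness, along a subsequence $u_{s+\sigma}\rightharpoonup u_*$ weakly in $H^{s+\epsilon}(\Omega)$, strongly in $L^2(\Omega)$, and $\|u_*\|_{L^2}=1$. A continuity argument for $s\mapsto\lambda_s$ analogous to Step 1 of the previous theorem (uniform $H^{s'}$-bound, Fatou, passage to the limit in the weak formulation of \eqref{eigenvalues-problem}) shows $\lambda_{s+\sigma}\to\lambda_s$ and identifies $u_*\in\M_s$. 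Since $\lambda_s\leq\cE_s(u_{s+\sigma},u_{s+\sigma})$ by \eqref{first-eigenvalue-of-regional-fractional-laplacian} and $\lambda_{s+\sigma}=\cE_{s+\sigma}(u_{s+\sigma},u_{s+\sigma})$, the same $(\psi_\sigma,\log)$-expansion provides a lower estimate for $(\lambda_{s+\sigma}-\lambda_s)/\sigma$, and Dominated Convergence -- powered again by the uniform $H^{s+\epsilon}$-bound -- produces
\begin{equation*}
\liminf_{\sigma\to 0^+}\frac{\lambda_{s+\sigma}-\lambda_s}{\sigma}\geq J_s(u_*)\geq\inf_{\M_s}J_s.
\end{equation*}

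Combining the two bounds forces equality, so $\partial_s^+\lambda_s$ exists, equals $\inf_{\M_s}J_s$, and the infimum is attained at $u_*$. The main obstacle is precisely the possible non-simplicity of $\lambda_s$: distinct subsequences of $u_{s+\sigma}$ may converge to distinct elements of $\M_s$, each yielding a (potentially different) candidate $J_s(u_*)$; this is why the formula must carry an $\inf$ over $\M_s$ and why only one-sided differentiability can be expected in general. The secondary technical point -- proving continuity $\lambda_{s+\sigma}\to\lambda_s$ together with the extraction of the limit $u_*\in\M_s$ -- is essentially a repetition of Step 1 of Theorem \ref{differentiability-of-u_s}, now with the eigenvalue equation replacing the Poisson equation and with Proposition \ref{higher-sobolev-regularity} supplying the indispensable uniform higher-regularity bound.
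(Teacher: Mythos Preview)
Your proposal is correct and follows essentially the same approach as the paper: the same sandwiching of the difference quotient using $u\in\M_s$ as a competitor for $\lambda_{s+\sigma}$ (upper bound) and $u_{s+\sigma}\in\M_{s+\sigma}$ as a competitor for $\lambda_s$ (lower bound), with Proposition~\ref{higher-sobolev-regularity} powering the Dominated Convergence step and the extraction of the subsequential limit $u_*\in\M_s$. The only organizational difference is that the paper isolates the continuity of $s\mapsto\lambda_{k,s}$ as a separate preliminary lemma (Lemma~\ref{right-continuity-of-eigenvalues}), whereas you fold it into the lower-bound step by analogy with Step~1 of Theorem~\ref{differentiability-of-u_s}.
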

	We now collect some partial results needed for the proof of Theorem \ref{differentiability-of-eigenvalues}. In the sequel, we prove the following two lemmas in the same spirit as Theorem 1.3 and Lemma 2.1 in \cite{deblassie2005alpha}.
	% It is important to point out that the proof of $\liminf$ of Lemma \ref{right-continuity-of-eigenvalues} below is different from the one given in \cite{deblassie2005alpha} in the proof of Theorem 1.3 In \cite{deblassie2005alpha}, the authors used a contradiction argument while in our case we use a direct argument.
	\begin{lemma}\label{continuity-of-quadratic-form-with-respect-to-s}
		Let $\phi,\psi\in C^{\infty}(\overline{\Omega})$. Then, regarded as function of $s$,
		\begin{equation*}
		\cE_s(\phi,\psi):(0,1)\rightarrow\R
		\end{equation*}
		is continuous on $(0,1)$.
	\end{lemma}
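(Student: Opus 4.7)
The plan is to verify continuity at an arbitrary fixed $s_0 \in (0,1)$ by taking a sequence $s_n \to s_0$ and showing $\cE_{s_n}(\phi,\psi) \to \cE_{s_0}(\phi,\psi)$. Since $\cE_s = \tfrac{C_{N,s}}{2}\, I_s(\phi,\psi)$ with $I_s$ the double integral, and since $s \mapsto C_{N,s}$ is smooth (it is even $C^1$, as already used several times in the paper), it suffices to show that $I_s(\phi,\psi)$ is continuous in $s$. I would handle this by dominated convergence on $\Omega \times \Omega$.

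First I would fix a small $\delta>0$ with $[s_0-\delta, s_0+\delta] \subset (0,1)$, and restrict attention to $s_n$ inside this interval. Pointwise convergence of the integrand $\frac{(\phi(x)-\phi(y))(\psi(x)-\psi(y))}{|x-y|^{N+2s_n}}$ to its $s_0$-analogue on the open set $\{x \neq y\}$ is immediate from continuity of $t \mapsto |x-y|^{-t}$. The main task is to produce an $s_n$-independent integrable majorant. Since $\phi, \psi \in C^{\infty}(\overline{\Omega})$, the mean value theorem gives $|\phi(x)-\phi(y)|\,|\psi(x)-\psi(y)| \leq \|\nabla \phi\|_{L^\infty}\|\nabla \psi\|_{L^\infty}\,|x-y|^2$, and therefore the integrand is bounded pointwise by $C\,|x-y|^{2-N-2s_n}$.

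To make this bound uniform in $s_n \in [s_0-\delta, s_0+\delta]$, I would split according to whether $|x-y| \leq 1$ or $|x-y|>1$: on the first region $|x-y|^{2-N-2s_n} \leq |x-y|^{2-N-2(s_0+\delta)}$, and on the second region $|x-y|^{2-N-2s_n}\leq \max(1,d_\Omega^{2-N-2(s_0-\delta)})$. The critical point is that $s_0+\delta<1$, which makes $|x-y|^{2-N-2(s_0+\delta)}$ integrable on $\Omega\times\Omega$ via the standard polar-coordinates estimate $\int_0^{d_\Omega} \rho^{1-2(s_0+\delta)}\,d\rho < \infty$. Hence we have a fixed $L^1(\Omega\times\Omega)$ majorant.

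With pointwise convergence a.e.\ and this uniform $L^1$ bound in hand, dominated convergence yields $I_{s_n}(\phi,\psi) \to I_{s_0}(\phi,\psi)$, and combining with $C_{N,s_n}\to C_{N,s_0}$ gives $\cE_{s_n}(\phi,\psi) \to \cE_{s_0}(\phi,\psi)$. The only genuinely delicate point is ensuring the majorant stays integrable, which forces us to stay bounded away from $s=1$ (where the Gagliardo kernel degenerates against the Lipschitz bound); this is why working on the compact subinterval $[s_0-\delta,s_0+\delta]$ with $s_0+\delta<1$ is essential. No other subtlety arises, so this dominated convergence argument completes the proof.
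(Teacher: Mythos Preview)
Your argument is correct and follows essentially the same route as the paper: both proofs use the Lipschitz bound $|\phi(x)-\phi(y)||\psi(x)-\psi(y)|\leq \|\nabla\phi\|_{L^\infty}\|\nabla\psi\|_{L^\infty}|x-y|^2$ to produce a uniform integrable majorant on a compact subinterval $[s_0-\delta,s_0+\delta]\subset(0,1)$, and then apply dominated convergence. The only cosmetic differences are that the paper writes the majorant as $\max\{|x-y|^{2-N-2(s_0-\delta)},|x-y|^{2-N-2(s_0+\delta)}\}$ rather than splitting into the regions $|x-y|\le1$ and $|x-y|>1$, and handles the constant $C_{N,s}$ implicitly rather than factoring it out.
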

	\begin{proof}
		It suffices to show that
		\begin{equation*}
		\lim\limits_{\alpha\rightarrow s}\cE_{\alpha}(\phi,\psi)=\cE_s(\phi,\psi).
		\end{equation*}
		Let $\alpha\in(s-\delta,s+\delta)$ where $\delta=\frac{1}{4}\min\{1-s,s\}$. Then,
		\begin{align*}
		\frac{(\phi(x)-\phi(y))(\psi(x)-\psi(y))}{|x-y|^{N+2\alpha}}&\leq\frac{|\phi(x)-\phi(y)||\psi(x)-\psi(y)|}{|x-y|^{N+2\alpha}}\\&\leq\|\nabla\phi\|_{L^{\infty}(\Omega)}\|\nabla\psi\|_{L^{\infty}(\Omega)}\frac{1}{|x-y|^{N+2\alpha-2}}\\&\leq C(\phi,\psi)\max\Big\{\frac{1}{|x-y|^{N+2(s-\delta)-2}},\frac{1}{|x-y|^{N+2(s+\delta)-2}}\Big\}\\&=:g_{s,\delta}(x,y).
		\end{align*}
		Using polar coordinates, it is not difficult to see that $g_{s,\delta}$ is integrable on $\Omega\times\Omega$ with
		\begin{equation*}
		\int_{\Omega}\int_{\Omega}|g_{s,\delta}(x,y)|\ dxdy\leq C(\phi,\psi)\Big(\frac{|\Omega||S^{N-1}|}{2(1-s+\delta)}d^{2(1-s+\delta)}_{\Omega}+\frac{|\Omega||S^{N-1}|}{2(1-s-\delta)}d^{2(1-s-\delta)}_{\Omega}\Big),
		\end{equation*}
		and therefore, applying Lebesgue's Dominated Convergence Theorem, we get that
		\begin{equation*}
		\lim\limits_{\alpha\rightarrow s}\cE_{\alpha}(\phi,\psi)=\cE_s(\phi,\psi),
		\end{equation*}
		as needed.
	\end{proof}
	\begin{lemma}\label{right-continuity-of-eigenvalues}
		Let $k\geq1$ and $\lambda_{k,s}$ the $k$-th eigenvalue of $(-\Delta)^s_{\Omega}$ in $\Omega$. Then, regarded as function of $s$, $\lambda_{k,s}$ is continuous on $(0,1)$ for all $k\in\N$.
	\end{lemma}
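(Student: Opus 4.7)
The plan is to fix $s_0 \in (0,1)$ and verify continuity of $s \mapsto \lambda_{k,s}$ at $s_0$ by separately establishing $\limsup_{s \to s_0} \lambda_{k,s} \leq \lambda_{k,s_0}$ and $\liminf_{s \to s_0} \lambda_{k,s} \geq \lambda_{k,s_0}$, working throughout from the min-max characterization \eqref{k-th-eigenvalue-of-regional-fractional-laplacian}.

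For the upper bound, I would test \eqref{k-th-eigenvalue-of-regional-fractional-laplacian} against $k$-dimensional subspaces $V \subset C^\infty_0(\ov\Omega)$; these are admissible for every $s \in (0,1)$ since $C^\infty_0(\ov\Omega) \subset \mathbb{X}^s(\Omega)$. Writing $\phi = \sum_j a_j v_j$ with a fixed basis $\{v_1,\dots,v_k\}$ of $V$ turns $\cE_s(\phi,\phi)$ into a quadratic form in $a$ whose coefficients $\cE_s(v_i,v_j)$ depend continuously on $s$ by Lemma \ref{continuity-of-quadratic-form-with-respect-to-s}, so $s \mapsto \sup_{\phi \in S_V} \cE_s(\phi,\phi)$ is continuous (maximization of a continuous quadratic form over the compact unit sphere of a finite-dimensional space). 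This yields $\limsup_{s\to s_0} \lambda_{k,s} \leq \sup_{\phi \in S_V} \cE_{s_0}(\phi,\phi)$; taking the infimum over smooth $k$-dimensional $V$ and invoking the density of $C^\infty_0(\ov\Omega)$ in $\mathbb{X}^{s_0}(\Omega)$ (which allows any competitor for $\lambda_{k,s_0}$ to be approximated by a near-isometric smooth $k$-dimensional subspace) produces $\limsup_{s\to s_0} \lambda_{k,s} \leq \lambda_{k,s_0}$.

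For the lower bound, I fix a sequence $s_n \to s_0$ and pick $L^2$-orthonormal eigenfunctions $\phi_{1,s_n},\dots,\phi_{k,s_n}$ associated with $\lambda_{1,s_n}\leq\cdots\leq\lambda_{k,s_n}$. By the upper bound just obtained, these eigenvalues stay uniformly bounded, hence $|\phi_{j,s_n}|_{H^{s_n}(\Omega)}^2 = 2\lambda_{j,s_n}/C_{N,s_n}$ is bounded, and the elementary comparison $|u|_{H^{s'}(\Omega)}^2 \leq d_\Omega^{2(s_n - s')} |u|_{H^{s_n}(\Omega)}^2$ (used in Step 1 of the proof of Theorem \ref{differentiability-of-u_s}) delivers a uniform bound on $|\phi_{j,s_n}|_{H^{s'}(\Omega)}$ for any fixed $s' < s_0$ close enough to $s_0$. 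The compact embedding $H^{s'}(\Omega) \hookrightarrow L^2(\Omega)$ yields, after passing to a subsequence, limits $\phi_{j,*} \in L^2(\Omega)$ that remain $L^2$-orthonormal and mean-free, hence span a $k$-dimensional subspace $V_*$. For arbitrary $\phi = \sum_j a_j \phi_{j,*}$ I set $\phi_n := \sum_j a_j \phi_{j,s_n}$, which lies in the eigenspan $V_n$ at $s_n$ and satisfies $\cE_{s_n}(\phi_n,\phi_n) \leq \lambda_{k,s_n}\|\phi_n\|_{L^2}^2 = \lambda_{k,s_n}\|\phi\|_{L^2}^2$. Since $\phi_n \to \phi$ pointwise a.e.\ along a further subsequence, Fatou's lemma applied to the nonnegative integrand $(\phi_n(x)-\phi_n(y))^2|x-y|^{-N-2s_n}$, together with continuity of $s \mapsto C_{N,s}$, gives $\cE_{s_0}(\phi,\phi) \leq (\liminf_n \lambda_{k,s_n}) \|\phi\|_{L^2}^2$; in particular $V_* \subset \mathbb{X}^{s_0}(\Omega)$, and the min-max at $s_0$ yields $\lambda_{k,s_0} \leq \sup_{\phi \in S_{V_*}} \cE_{s_0}(\phi,\phi) \leq \liminf_n \lambda_{k,s_n}$.

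The principal obstacle is the lower semicontinuity step: the eigenfunctions $\phi_{j,s_n}$ live in different Sobolev spaces $H^{s_n}(\Omega)$ as $n$ varies, so extracting a meaningful limit requires dropping to a common ambient space $H^{s'}(\Omega)$ with $s' < s_n$ for all large $n$, and one must then verify both that the limits span a genuine $k$-dimensional subspace of $\mathbb{X}^{s_0}(\Omega)$ and that the $s_0$-energy of any element in this subspace is controlled by $\liminf \lambda_{k,s_n}$ through Fatou applied to the varying-$s_n$ Gagliardo kernel.
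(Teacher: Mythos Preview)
Your proof is correct and follows the same two-step $\limsup$/$\liminf$ structure as the paper, with the upper-bound argument essentially identical to the paper's Step~1 (just phrased more abstractly as continuity of a finite-dimensional quadratic form rather than an explicit $\epsilon$-computation).

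The genuine difference is in the lower bound. The paper obtains a uniform $H^{s_0}$-bound on the eigenfunctions $\phi_{j,\alpha_n}$ by invoking Proposition~\ref{higher-sobolev-regularity} (which ultimately rests on Fall's boundary regularity \eqref{boundary-regularity-for-regional}--\eqref{boundary-regularity-for-the-gradient}), extracts weak $H^{s_0}$-limits $e_{j,s_0}$, and then shows these limits are \emph{actual eigenfunctions} for the limiting values $\lambda^*_{j,s_0}$ before plugging their span into the min-max. Your route is more elementary: you bound $|\phi_{j,s_n}|_{H^{s_n}}$ directly from the energy identity and the already-established upper bound on the eigenvalues, drop to a common ambient space $H^{s'}$ with $s'<s_0$ via the kernel comparison, and recover control of the $H^{s_0}$-energy only through Fatou applied to the varying-exponent Gagliardo integrand---never identifying the limits as eigenfunctions and never touching Proposition~\ref{higher-sobolev-regularity}. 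This buys you independence from the $C^{1,1}$ hypothesis on $\partial\Omega$ at this point of the argument; the paper's approach, by contrast, yields the extra information that the $e_{j,s_0}$ are genuine eigenfunctions, which is not needed for continuity itself but is consistent with how Proposition~\ref{higher-sobolev-regularity} is reused downstream in the proof of Theorem~\ref{differentiability-of-eigenvalues}.
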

	\begin{proof}
		The proof is divided in two steps. First one shows the limsup inequality. The second step is to obtain the reverse inequality i.e., the liminf inequality.\\
		
		\textbf{\textit{Step 1}.} We show that
		\begin{equation}\label{limit-sup-for-continuity}
		\limsup_{\alpha\rightarrow s}\lambda_{k,\alpha}\leq\lambda_{k,s}.
		\end{equation}
		Let $\epsilon>0$  and $k\geq1$. Using that $C^{\infty}_0(\overline{\Omega})$ is dense in $\mathbb{X}^s(\Omega)$, there exist $\phi_1,\dots,\phi_k\in C^{\infty}_0(\overline{\Omega})$ such that 
		\begin{equation}\label{density-consequence}
		|\langle\phi_{i,s},\phi_{j,s}\rangle_{L^2(\Omega)}-\langle\phi_{i},\phi_{j}\rangle_{L^2(\Omega)}|\leq\frac{\epsilon}{8k^2}\quad\text{and}\quad|\cE_s(\phi_{i,s},\phi_{j,s})-\cE_s(\phi_{i},\phi_{j})|\leq\frac{\epsilon}{8k^2},
		\end{equation}
		for all $1\leq i,j\leq k$. Now, from Lemma \ref{continuity-of-quadratic-form-with-respect-to-s}, there is $\beta_0>0$ such that for all $\alpha\in (s-\beta_0,s+\beta_0)$,
		\begin{equation}\label{c}
		|\cE_{\alpha}(\phi_i,\phi_j)-\cE_s(\phi_i,\phi_j)|\leq\frac{\epsilon}{8k^2}.
		\end{equation}
		According to \eqref{density-consequence}, we also have
		\begin{equation*}
		|\langle\phi_{i},\phi_j\rangle_{L^2(\Omega)}|<\frac{\epsilon}{8k^2}~(i\neq j)\quad\text{and}\quad 1-\frac{\epsilon}{8k^2}<\|\phi_i\|^2_{L^2(\Omega)}<1+\frac{\epsilon}{8k^2},
		\end{equation*}
		and therefore as in \cite[Section $2$]{deblassie2005alpha}, the familly  $\{\phi_i\}_{i=1,\dots,k}$ is linearily independent. 
		As a consequence, we have by setting in particular $V=\text{span}\{\phi_1,\dots,\phi_k\}$ that
		\begin{equation}\label{d}
		\lambda_{k,\alpha}\leq\sup_{\phi\in S_V}\cE_{\alpha}(\phi,\phi)\leq\cE_{\alpha}(\phi,\phi)+\frac{\epsilon}{4}.
		\end{equation}
		Now, for $\phi\in S_V$, there is a sequence of real numbers $\{a_i\}_{i=1,\dots,k}\subset\R$ satisfying $\sum_{i=1}^{k}a^2_i=1$ such that $\phi=\sum_{i=1}^{k}a_i\phi_i$. Using this and \eqref{c}, we get
		\begin{align*}
		|\cE_{\alpha}(\phi,\phi)-\cE_s(\phi,\phi)|\leq\sum_{i=1}^{k}\sum_{j=1}^{k}|a_i||a_j||\cE_{\alpha}(\phi_i,\phi_j)-\cE_s(\phi_i,\phi_j)|\leq\frac{\epsilon}{4},
		\end{align*}
		i.e.,
		\begin{equation*}
		\cE_{\alpha}(\phi,\phi)\leq\cE_s(\phi,\phi)+\frac{\epsilon}{4}.
		\end{equation*}
		Consequently, we have with \eqref{d} that
		\begin{equation}\label{e}
		\lambda_{k,\alpha}\leq\cE_s(\phi,\phi)+\frac{\epsilon}{2}.
		\end{equation}
		Now, by letting $\psi=\sum_{i=1}^{k}a_i\phi_{i,s}$ and by using \eqref{density-consequence}, we can follows the argument above to show that
		\begin{equation}
		|\cE_s(\psi,\psi)-\cE_s(\phi,\phi)|<\frac{\epsilon}{4}
		\end{equation}
		i.e.,
		\begin{equation}
		\cE_s(\phi,\phi)<\cE_s(\psi,\psi)+\frac{\epsilon}{4}.
		\end{equation}
		Combining this with \eqref{e} and by using also the monotonicity of $\{\lambda_{i,s}\}_i$, we see that
		\begin{align*}
		\lambda_{k,\alpha}&\leq\cE_{\alpha}(\phi,\phi)+\frac{\epsilon}{2}\leq\cE_s(\psi,\psi)+\frac{3\epsilon}{4}\\&\leq\sum_{i=1}^{k}a^2_i\lambda_{i,s}+\frac{3\epsilon}{4}\leq\lambda_{k,s}\sum_{i=1}^{k}a^2_i+\frac{3\epsilon}{4}=\lambda_{k,s}+\frac{3\epsilon}{4}.
		\end{align*}
		Since $\epsilon$ was chosen arbitrarily, we therefore have
		\begin{equation*}
		\limsup_{\alpha\rightarrow s}\lambda_{k,\alpha}\leq\lambda_{k,s},
		\end{equation*}
		as claimed.\\
		
		\textbf{\textit{Step 2}.} We show that
		\begin{equation}\label{limit-inf-for-continuity}
		\liminf_{\alpha\rightarrow s}\lambda_{k,\alpha}\geq\lambda_{k,s}.
		\end{equation}
		To this end, we set $\lambda^{*}_{k,s}:=\liminf_{\alpha\rightarrow s}\lambda_{k,\alpha}$ and let $\alpha_n\in(0,1)$ be a sequence  such that $\alpha_n\to s$ and  $\lambda_{k,\alpha_n}\rightarrow\lambda^{*}_{k,s}$ as $n\rightarrow\infty$. We now choose a system of $L^2$-orthonormal eigenfunctions $\phi_{1,\alpha_n}, \dots,  \phi_{k,\alpha_n}$ associated to $\lambda_{1,\alpha_n}, \dots, \lambda_{k,\alpha_n}$.
		
		By Proposition \ref{higher-sobolev-regularity}, we have that for $n$ sufficiently large,
		\begin{align}\label{f}
		\phi_{j,\alpha_n}~~\text{is uniformly bounded in}~~H^s(\Omega)~~\text{for}~~j=1,\dots,k.
		\end{align}
		
		Therefore, after passing to a subsequence, there exists $e_{j,s}\in H^s(\Omega)$ such that
		\begin{equation*}
		\begin{aligned}
		&\phi_{j,\alpha_n}\rightharpoonup e_{j,s}\quad\text{weakly in}~~H^s(\Omega),\\
		&\phi_{j,\alpha_n}\rightarrow e_{j,s}\quad\text{strongly in}~~L^2(\Omega),~~\text{for}~~j=1, \dots, k,\\
		&\phi_{j,\alpha_n}\rightarrow e_{j,s}\quad\text{a.e. in}~~\Omega,
		\end{aligned}
		\end{equation*}
		which therefore imply that $\int_{\Omega}e_{j,s}\ dx=0.$ Thus, $e_{j,s}\in\mathbb{X}^s(\Omega).$~ Furthermore, by strong convergence in $L^2(\Omega)$, it follows also that $e_{1,s},\dots, e_{k,s}$ form an $L^2$-orthonormal system.
		
		Moreover, for every $j=1,\dots,k,$ we have
		\begin{align*}
		\lambda^{*}_{j,s}\langle e_{j,s},\phi\rangle_{L^2(\Omega)}&=\lim\limits_{n\rightarrow\infty}\lambda_{j,\alpha_n}\langle\phi_{j,\alpha_n},\phi\rangle_{L^2(\Omega)}=\lim\limits_{n\to\infty}\langle \phi_{j,\alpha_n},(-\Delta)^{\alpha_n}_{\Omega}\phi\rangle_{L^2(\Omega)}\\
		&=\langle e_{j,s},(-\Delta)^{s}_{\Omega}\phi\rangle_{L^2(\Omega)}=\cE_s(e_{j,s},\phi)
		\end{align*}
		i.e.,
		$$	\cE_s(e_{j,s},\phi)=\lambda^{*}_{j,s}\langle e_j,\phi\rangle_{L^2(\Omega)}~~\text{for all}~~\phi\in C^{\infty}_0(\overline{\Omega}),$$
		and by density,
		$$	\cE_s(e_{j,s},\phi)=\lambda^{*}_{j,s}\langle e_{j,s},\phi\rangle_{L^2(\Omega)}~~\text{for all}~~\phi\in\mathbb{X}^s(\Omega).$$
		Therefore, $(\lambda^{*}_{j,s})_{j\in\{1,\dots,k\}}$ is an increasing sequence of  eigenvalues of $(-\Delta)^s_{\Omega}$ with corresponding eigenfunctions $(e_{j,s})_{j\in\{1,\dots,k\}}.$ Now, by choosing in particular  $V=\text{span}\{e_{1,s},e_{2,s},\dots,e_{k,s}\}$, we have from \eqref{k-th-eigenvalue-of-regional-fractional-laplacian} that
		\begin{equation}\label{uper-bound-of-lambda-k}
		\lambda_{k,s}\leq\sup_{\phi\in S_V}\cE_s(\phi,\phi).
		\end{equation}
		Moreover, for all $\phi\in S_V,$ there exists a family of numbers $(c_j)_{j\in\{1,\cdots,k\}}\subset\R$ satisfying $\sum_{j=1}^{k}c^2_j=1$ such that $\phi=\sum_{j=1}^{k}c_je_{j,s}$. From this, we get that
		\begin{align*}
		\cE_s(\phi,\phi)&=\cE_s\Big(\sum_{j=1}^{k}c_je_{j,s},\sum_{j=1}^{k}c_je_{j,s}\Big)=\sum_{i,j=1}^{k}c_ic_j\lambda^{*}_{j,s}\langle e_{i,s},e_{j,s}\rangle_{L^2(\Omega)}\\
		&=\sum_{j=1}^{k}c^2_j\lambda^{*}_{j,s}\leq\max_{j\in\{1,\dots,k\}}\lambda^{*}_{j,s}\sum_{j=1}^{k}c^2_j=\max_{j\in\{1,\dots,k\}}\lambda^{*}_{j,s}.
		\end{align*}
		Hence, from \eqref{uper-bound-of-lambda-k}, we have that
		\begin{equation*}
		\lambda_{k,s}\leq\max_{j\in\{1,\dots,k\}}\lambda^{*}_{j,s}\leq\lambda^{*}_{k,s},
		\end{equation*}
		which therefore implies that
		\begin{equation*}
		\liminf_{\alpha\rightarrow s}\lambda_{k,\alpha}=\lambda^*_{k,s}\geq\lambda_{k,s}.
		\end{equation*}
		Combining both \textbf{\textit{Steps 1}} and \textbf{\textit{2}} we conclude that
		\begin{equation}
		\lim\limits_{\alpha\rightarrow s}\lambda_{k,\alpha}=\lambda_{k,s} 
		\end{equation}
		as wanted. 
	\end{proof}
	Below, we now give the proof of Theorem \ref{differentiability-of-eigenvalues}.
	\begin{proof}[Proof of Theorem \ref{differentiability-of-eigenvalues}]
		By Lemma \ref{right-continuity-of-eigenvalues} and Proposition \ref{higher-sobolev-regularity}, we deduce that the function $u_{s+\sigma}$ is uniformly bounded in $H^s(\Omega)$ with respect to $\sigma$. Therefore after passing to a subsequence, there is $w_s\in H^s(\Omega)$ such that 
		\begin{equation}\label{m1}
		\begin{aligned}
		&u_{s+\sigma}\rightharpoonup w_s\quad\text{weakly in}~~H^s(\Omega),\\
		&u_{s+\sigma}\rightarrow w_s\quad\text{strongly in}~~L^2(\Omega),\\
		&u_{s+\sigma}\rightarrow w_s\quad\text{a.e. in}~~\Omega. 
		\end{aligned}
		\end{equation}
		We wish now to show that $w_s$ is also an eigenfunction corresponding to $\lambda_s$. First of all, from \eqref{m1}, we have in particular that $\|w_s\|_{L^2(\Omega)}=1$ and $\int_{\Omega}w_s\ dx=0$. 
		
		Next, we claim that
		\begin{enumerate}
			\item [$(a)$] $\cE_{s+\sigma}(u_{s+\sigma},\phi)\to \cE_s(w_s,\phi)$
			\item[$(b)$]  $\lambda_{s+\sigma}\int_{\Omega}u_{s+\sigma}\phi\ dx\to \lambda_s\int_{\Omega}w_s\phi\ dx$
		\end{enumerate}
		as $\sigma\to0^+$ for all $\phi\in C^{\infty}_0(\ov\Omega)$. 
		
		We start by proving $(b)$. We write
		\begin{align*}
		\int_{\Omega}(\lambda_{s+\sigma}u_{s+\sigma}-\lambda_sw_s)\phi\ dx=\lambda_s\int_{\Omega}(u_{s+\sigma}-w_s)\phi \ dx+(\lambda_{s+\sigma}-\lambda_s)\int_{\Omega}u_{s+\sigma}\phi\ dx.
		\end{align*}
		From the above decomposition and thanks to  \eqref{m1} and Lemma \ref{right-continuity-of-eigenvalues}, we deduce claim $(b)$.
		
		Regarding $(a)$, we have
		\begin{align}\label{m2}
		\nonumber&|\cE_{s+\sigma}(u_{s+\sigma},\phi)-\cE_s(w_s,\phi)|\\
		\nonumber&\leq\Big|\frac{C_{N,s+\sigma}-C_{N,s}}{2}\Big|\Big|\int_{\Omega}\int_{\Omega}\frac{(w_s(x)-w_s(y))(\phi(x)-\phi(y))}{|x-y|^{N+2s}}\ dxdy\Big|\\
		\nonumber&+\frac{C_{N,s+\sigma}}{2}\Big|\int_{\Omega}\int_{\Omega}\frac{((u_{s+\sigma}(x)-u_{s+\sigma}(y))-(w_s(x)-w_s(y)))(\phi(x)-\phi(y))}{|x-y|^{N+2s}}\ dxdy\Big|\\
		\nonumber&+\frac{C_{N,s+\sigma}}{2}\Big|\int_{\Omega}\int_{\Omega}\Big(|x-y|^{-2\sigma}-1\Big)\frac{(u_{s+\sigma}(x)-u_{s+\sigma}(y))(\phi(x)-\phi(y))}{|x-y|^{N+2s}}\ dxdy\Big|\\
		&=:I_{\sigma}+II_{\sigma}+III_{\sigma}.
		\end{align}
		Since $w_s, \phi\in H^s(\Omega)$ and $s\mapsto C_{N,s}$ is of class $C^1$, then from Cauchy-Schwarz inequality, we get that 
		\begin{equation}\label{m3}
		I_{\sigma}\leq c|C_{N,s+\sigma}-C_{N,s}|\to0\quad\quad\text{as}~~\sigma\to 0^+.
		\end{equation}
		Now, using \eqref{bound-of-constant-c-n-s} and the fact that $u_{s+\sigma}\rightharpoonup w_s$ weakly in $H^s(\Omega)$, one gets
		\begin{equation}\label{m4}
		II_{\sigma}\to0\quad\quad\text{as}~~\sigma\to0^+.
		\end{equation} 
		On the other hand, recalling \eqref{bound-of-constant-c-n-s},  \eqref{n2} and \eqref{n9}, we have
		\begin{align*}
		III_{\sigma}&\leq \sigma c\int_{\Omega}\int_{\Omega}\frac{|u_{s+\sigma}(x)-u_{s+\sigma}(y)||\phi(x)-\phi(y)|}{|x-y|^{N+2s}}|\log|x-y||\ dxdy\\
		&+\sigma c\int_{\Omega}\int_{\Omega}\frac{|u_{s+\sigma}(x)-u_{s+\sigma}(y)||\phi(x)-\phi(y)|}{|x-y|^{N+2s+2\sigma}}|\log|x-y||\ dxdy.
		\end{align*}
		Arguing as in Section \ref{differentiability-for-general-value-s}, one obtains
		\begin{equation}\label{m5}
		III_{\sigma}\leq \sigma c\to0\quad\quad\text{as}~~\sigma\to0^+.
		\end{equation}
		From \eqref{m3}, \eqref{m4} and \eqref{m5}, it follows from \eqref{m2} that
		\begin{equation}
		\cE_{s+\sigma}(u_{s+\sigma},\phi)\to\cE_s(w_s,\phi)\quad\quad\text{as}~~\sigma\to 0^+,
		\end{equation}
		yielding claim $(a)$.
		
		Finally, using that $u_{s+\sigma}$ is solution to
		\begin{equation}
		\cE_{s+\sigma}(u_{s+\sigma},\phi)=\lambda_{s+\sigma}\int_{\Omega}u_{s+\sigma}\phi\ dx
		\end{equation}
		for all $\phi\in C^{\infty}_0(\ov\Omega)$, one deduces from claims $(a)$ and $(b)$ that $w_s$ is solution to
		\begin{equation}\label{m5'}
		\cE_s(w_s,\phi)=\lambda_s\int_{\Omega}w_s\phi\ dx
		\end{equation}
		and from this, one concludes that $w_s$ with $\|w_s\|_{L^2(\Omega)}=1, \int_{\Omega}w_s\ dx=0$ is an eigenfunction corresponding to $\lambda_s$.
		
		Coming back to the proof of \eqref{C-1-regularity-of-eigenvalues}, since  $u_{s+\sigma}\in H^{s+\sigma}(\Omega)\subset H^s(\Omega)$, one can use it as an admissible function in the definition of $\lambda_s$ to get
		\begin{equation}\label{m6}
		\lambda_s\leq\cE_s(u_{s+\sigma},u_{s+\sigma})=\frac{C_{N,s}}{2}\int_{\Omega}\int_{\Omega}\frac{(u_{s+\sigma}(x)-u_{s+\sigma}(y))^2}{|x-y|^{N+2s}}\ dxdy.
		\end{equation}
		Now, from \eqref{m6}, we have 
		\begin{align*}
		&\lambda_{s+\sigma}-\lambda_s=\cE_{s+\sigma}(u_{s+\sigma},u_{s+\sigma})-\lambda_s\\
		&\geq
		\frac{C_{N,s+\sigma}}{2}\int_{\Omega}\int_{\Omega}\frac{(u_{s+\sigma}(x)-u_{s+\sigma}(y))^2}{|x-y|^{N+2(s+\sigma)}}\ dxdy-\frac{C_{N,s}}{2}\int_{\Omega}\int_{\Omega}\frac{(u_{s+\sigma}(x)-u_{s+\sigma}(y))^2}{|x-y|^{N+2s}}\ dxdy\\
		&=\frac{C_{N,s+\sigma}-C_{N,s}}{2}\int_{\Omega}\int_{\Omega}\frac{(u_{s+\sigma}(x)-u_{s+\sigma}(y))^2}{|x-y|^{N+2s+2\sigma}}\ dxdy\\
		&\ \ \ \ \ \ \ \ \ \ \ \ \ \ \ \ \ \ \ \ +\frac{C_{N,s}}{2}\int_{\Omega}\int_{\Omega}(|x-y|^{-2\sigma}-1)\frac{(u_{s+\sigma}(x)-u_{s+\sigma}(y))^2}{|x-y|^{N+2s}}\ dxdy\\
		&=\frac{C_{N,s+\sigma}-C_{N,s}}{C_{N,s+\sigma}}\lambda_{s+\sigma} +\frac{C_{N,s}}{2}\int_{\Omega}\int_{\Omega}(|x-y|^{-2\sigma}-1)\frac{(u_{s+\sigma}(x)-u_{s+\sigma}(y))^2}{|x-y|^{N+2s}}\ dxdy
		\end{align*}
		Hence,
		\begin{align}\label{q}
		\nonumber\liminf_{\sigma\rightarrow0^+}\frac{\lambda_{s+\sigma}-\lambda_s}{\sigma}&\geq\lim\limits_{\sigma\rightarrow0^+}\frac{C_{N,s+\sigma}-C_{N,s}}{\sigma}\frac{\lambda_{s+\sigma}}{C_{N,s+\sigma}}\\
		&+\lim\limits_{\sigma\rightarrow0^+}\frac{C_{N,s+\sigma}}{2}\int_{\Omega}\int_{\Omega}\frac{(u_{s+\sigma}(x)-u_{s+\sigma}(y))^2}{|x-y|^{N+2s}}\frac{|x-y|^{-2\sigma}-1}{\sigma}\ dxdy.
		\end{align}
		Next, from \eqref{n2}
		\begin{align*}
		\frac{|x-y|^{-2\sigma}-1}{\sigma}=\frac{\exp(-2\sigma\log|x-y|)-1}{\sigma}=-2\log|x-y|\int_{0}^{1}\exp(-2\sigma t\log|x-y|)\ dt.
		\end{align*}
		Therefore,
		\begin{equation}\label{mean-value-theorem}
		\frac{|x-y|^{-2\sigma}-1}{\sigma}\rightarrow-2\log|x-y|\quad\text{as}\quad\sigma\rightarrow0^+.
		\end{equation}	
		Using this and recalling \eqref{logarithmic-decays}, we apply Lebesgue's Dominated Convergence Theorem in \eqref{q}, thanks to Proposition \ref{higher-sobolev-regularity}, to get that
		\begin{align}\label{m7}
		\liminf_{\sigma\rightarrow0^+}\frac{\lambda_{s+\sigma}-\lambda_s}{\sigma}\geq\frac{\partial_sC_{N,s}}{C_{N,s}}\lambda_s
		-C_{N,s}\int_{\Omega}\int_{\Omega}\frac{(w_{s}(x)-w_{s}(y))^2}{|x-y|^{N+2s}}\log|x-y|\ dxdy.
		\end{align}
		that is
		\begin{equation}\label{lim-inf-gamma-sigma}
		\liminf_{\sigma\rightarrow0^+}\frac{\lambda_{s+\sigma}-\lambda_s}{\sigma}\geq J_s(w_s)\geq\inf\{J_s(u):u\in \M_s\}.
		\end{equation}
		We now show the reverse inequality i.e.,
		\begin{equation}\label{lim-sup-gamma-sigma-}
		\limsup_{\sigma\rightarrow0^+}\frac{\lambda_{s+\sigma}-\lambda_s}{\sigma}\leq\inf\{J_s(u):u\in \M_s\}.
		\end{equation}
		Thanks to Proposition \ref{higher-sobolev-regularity}, we have that $u_s\in H^{s+\sigma}(\Omega)$ for $\sigma$ sufficiently small. Combining this with $\int_{\Omega}u_s\ dx=0$ and $\|u_s\|_{L^2(\Omega)}=1$, we can use $u_s$ as an admissible function for $\lambda_{s+\sigma}$ to get
		\begin{align*}
		\frac{\lambda_{s+\sigma}-\lambda_s}{\sigma}&\leq\frac{\cE_{s+\sigma}(u_s,u_s)-\cE_s(u_s,u_s)}{\sigma}\\&=\frac{C_{N,s+\sigma}}{2\sigma}\int_{\Omega}\int_{\Omega}\frac{(u_s(x)-u_s(y))^2}{|x-y|^{N+2(s+\sigma)}}\ dxdy-\frac{C_{N,s}}{2\sigma}\int_{\Omega}\int_{\Omega}\frac{(u_s(x)-u_s(y))^2}{|x-y|^{N+2s}}\ dxdy\\&=\frac{C_{N,s+\sigma}-C_{N,s}}{2\sigma}\int_{\Omega}\int_{\Omega}\frac{(u_s(x)-u_s(y))^2}{|x-y|^{N+2s}}\ dxdy\\&\quad\quad+\frac{C_{N,s+\sigma}}{2}\int_{\Omega}\int_{\Omega}\frac{(u_s(x)-u_s(y))^2}{|x-y|^{N+2s}}\frac{|x-y|^{-2\sigma}-1}{\sigma}\ dxdy.
		\end{align*}
		By letting $\sigma\rightarrow0^+$ and 
		applying Lebesgue's Dominated Convergence Theorem and considering once again \eqref{mean-value-theorem} and Proposition \ref{higher-sobolev-regularity}, we have that
		\begin{align*}
		\limsup_{\sigma\rightarrow0^+}\frac{\lambda_{s+\sigma}-\lambda_s}{\sigma}\leq J_s(u_s).
		\end{align*}
		Since the above inequality does not depends on the choice of $u_s\in \M_s$, we have that
		\begin{align}\label{lim-sup-gamma-sigma}
		\limsup_{\sigma\rightarrow0^+}\frac{\lambda_{s+\sigma}-\lambda_s}{\sigma}\leq\inf\{J_s(u):u\in \M_s\}.
		\end{align}
		Putting together \eqref{lim-inf-gamma-sigma} and \eqref{lim-sup-gamma-sigma} we infer that
		\begin{equation}\label{differentiability-for-s-neq-1/2}
		\lim\limits_{\sigma\rightarrow0^+}\frac{\lambda_{s+\sigma}-\lambda_s}{\sigma}=\inf\{J_s(u):u\in \M_s\}\equiv\partial^+_s\lambda_s.
		\end{equation}
		Finally, from Proposition \ref{higher-sobolev-regularity} we easily conclude that the infimum in \eqref{C-1-regularity-of-eigenvalues} is achieved.  
	\end{proof}
	\begin{remark}
		By a similar argument as above, one can also prove that the map $(0,1)\ni s\mapsto \lambda_{1,s}$ is left differentiable. However, due to the non-simplicity of $\lambda_{1,s}$, the right and left derivative $\partial^+_s\lambda_{1,s}$ and $\partial^-_s\lambda_{1,s}$ might not be equal.
	\end{remark}

	%\bibliography{references}
	%\bibliographystyle{amsplain}
	\bibliographystyle{ieeetr}

\end{document}